\theoremstyle{definition}
\theoremstyle{definition}
\theoremstyle{plain}
\theoremstyle{plain}
\newtheorem{theor}{Theorem}
\theoremstyle{plain}
\newtheorem{coror}{Corollary}
\theoremstyle{plain}
\theoremstyle{plain}
\theoremstyle{plain}
\newtheorem{thm}{Theorem}[subsection]
\theoremstyle{definition}
\theoremstyle{definition}
\theoremstyle{definition}
\theoremstyle{definition}
\theoremstyle{definition}
\newtheorem{rem}[thm]{Remark}
\theoremstyle{plain}
\newtheorem{prop}[thm]{Proposition}
\theoremstyle{plain}
\newtheorem{lem}[thm]{Lemma}
\theoremstyle{plain}
\newtheorem{cor}[thm]{Corollary}
\theoremstyle{definition}
\theoremstyle{definition}
\theoremstyle{definition}
\theoremstyle{definition}
\theoremstyle{definition}
\numberwithin{equation}{subsection}
\def\qed{\hfill $\Box$}
\def\F{\mathbb{F}}
\newcommand{\GL}{\operatorname{GL}}
\newcommand{\Aut}{\operatorname{Aut}}
\long\def\@makefntext#1{\parindent 1em\noindent 
\@hangfrom{\hbox to 1.8em{\hss $^{\@thefnmark}$}}#1}
\def\@seccntformat#1{\csname the#1\endcsname. }
\renewcommand\section{\@startsection {section}{1}{\z@}%
 {-3.5ex \@plus -1ex \@minus -.2ex}%
 {2.3ex \@plus.2ex}%
 {\normalfont\large\bfseries}}
\newcommand{\keywords}[1]{\textbf{{Keywords: }} #1}
\newcommand{\MSC}[1]{\textbf{{2010 Mathematical Subject Classification: }} #1}
\begin{document}

\title
{\bf Algorithmic study of superspecial hyperelliptic curves over finite fields}
\author
{Momonari Kudo\thanks{Kobe City College of Technology} \thanks{Institute of Mathematics for Industry, Kyushu University.}
\ and Shushi Harashita\thanks{Graduate School of Environment and Information Sciences, Yokohama National University.}}

\maketitle


\begin{abstract}
This paper presents algorithmic approaches to study {\it superspecial} hyperelliptic curves.
The algorithms proposed in this paper are: an algorithm to enumerate {\it superspecial} hyperelliptic curves of genus $g$ over finite fields $\mathbb{F}_q$, and an algorithm to compute the automorphism group of a (not necessarily superspecial) hyperelliptic curve over finite fields.
The first algorithm works for any $(g,q)$ such that $q$ and $2g+2$ are coprime and $q>2g+1$.
As an application, we enumerate {\it superspecial} hyperelliptic curves of genus $g=4$ over $\mathbb{F}_{p}$ for $11 \leq p \leq 23$ and over $\mathbb{F}_{p^2}$ for $11 \leq p \leq 19$ with our implementation on a computer algebra system Magma.
Moreover, we found {\it maximal} hyperelliptic curves and {\it minimal} hyperelliptic curves over $\mathbb{F}_{p^2}$ from among enumerated superspecial ones.
The second algorithm computes an automorphism as a concrete element in (a quotient of) a linear group in the general linear group of degree $2$.

\if 0
This paper presents an algorithm to enumerate {\it superspecial} hyperelliptic curves of genus $g$ over finite fields $\mathbb{F}_q$.
The algorithm works for any $(g,q)$ such that $q$ and $2g+2$ are coprime and $q>2g+1$.
As an application of the algorithm, we enumerate {\it superspecial} hyperelliptic curves of genus $g=4$ over $\mathbb{F}_{p}$ for $11 \leq p \leq 23$ and and $\mathbb{F}_{p^2}$ for $11 \leq p \leq 19$ with our implementation on a computer algebra system Magma.
This complements our preceding results in the non-hyperelliptic case.
Moreover, we found {\it maximal} hyperelliptic curves and {\it minimal} hyperelliptic curves over $\mathbb{F}_{q}$ from among enumerated superspecial ones.
\fi

\keywords{Hyperelliptic curves, superspecial curves, maximal curves, rational points.}\\
\MSC 14G05, 14G15, 14G50, 14H45, 14Q05, 68W30
\end{abstract}

\section{Introduction}\label{sec:intro}
By a curve, we mean a projective, geometrically irreducible, and non-singular algebraic curve.
Let $C$ be a curve of genus $g$ over a field $K$ of positive characteristic $p>0$.
We call $C$ {\it superspecial} ({\it s.sp}.\ for short) if its Jacobian variety is $\overline{K}$-isomorphic to the product of $g$ supersingular elliptic curves, where $\overline{K}$ denotes the algebraic closure of $K$.

The problem which we mainly consider in this paper is to enumerate $K$-isomorphism classses of s.sp.\ curves of genus $g$ over the finite field $\mathbb{F}_{q}$ of $q$ elements, where $q$ is a power of $p$.
Note that it suffices to consider the case of $q = p$ and $p^2$ since the number of isomorphism classes of s.sp.\ curves over $\mathbb{F}_{p^a}$ depends on the parity of $a$ (cf. \cite[Proposition 2.3.1]{KH17-2}).
If $g \leq 3$, there are some theoretical approaches based on Torelli's theorem to find s.sp.\ curves (cf.\ \cite{Deuring}, \cite[Prop. 4.4]{XYY16} for $g=1$, \cite{HI}, \cite{IK}, \cite{Serre1983} for $g=2$, and \cite{Hashimoto}, \cite{Ibukiyama} for $g=3$).
Different from the case of $g \leq 3$, these approaches are considered to be not so effective for $g \geq 4$ by the following reason: The dimension of the moduli space of curves of genus $g \geq 4$ is strictly less than that of the moduli space of principally polarized abelian varieties of dimension $g$.

In the {\it non-hyperelliptic} case for $g \geq 4$, computational approaches to enumerate s.sp.\ curves were proposed, and the enumeration in some small particular characteristic has been completed (cf.\ \cite{KH16}, \cite{KH17-2}, \cite{KH17a} for $g=4$, and \cite{KH18tri} for $g=5$).
In particular, the isomorphism classes of s.sp.\ non-hyperelliptic curves of genus $4$ over $\mathbb{F}_q$ are determined for $q= 5^{ 2e -1 }$, $5^{2 e}$, $7^{2 e-1}$, $7^{2 e}$ and $11^{2e -1}$, where $e$ is a natural number.

A fascinating fact in the hyperelliptic case is that the existence of a s.sp.\ hyperelliptic curve of genus $g$ in characteristic $p$ implies that of
 a maximal (resp. minimal) curve of genus $g$ over $\F_{p^2}$,
see \cite[Subsection 2.2]{KH18} for a review of this fact.
Ekedahl \cite[Theorem 1.1]{Ekedahl} showed $p \geq 2 g + 1$ if a s.sp.\ hyperelliptic curve exists for $(g,p) \neq (1,2)$.
While the existence of s.sp.\ hyperelliptic curves of given genus is known for many $p$ with some congruent relations (e.g., \cite{Taf}, \cite{Taf2}), the enumeration of s.sp.\ ones of genus $g \geq 4$ has not been completed yet even for small particular $p$.

This paper is the full-version of our conference paper \cite{KH18} which enumerates s.sp.\ hyperelliptic curves of genus $4$ for $q = 11^{2e - 1}$, $11^{2 e}$, $13^{2e-1}$, $13^{2e}$, $17^{2 e - 1}$, $17^{2 e}$ and $19^{2 e -1}$.
The following (Theorems \ref{MainTheorem11} and \ref{MainTheorem22}) are the main theorems of \cite{KH18}:

\begin{theor}[\cite{KH18}, Theorem 1]\label{MainTheorem11}
There is no s.sp.\ hyperelliptic curve of genus $4$ in characteristic $11$ and $13$.
\end{theor}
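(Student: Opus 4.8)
\emph{Proof sketch (proposal).} The plan is to apply the enumeration algorithm itself: for $g=4$ one has $2g+2=10$, coprime to $p$ for $p\in\{11,13\}$, and $p>2g+1=9$, so the running hypotheses are met. The starting point is the classical Cartier--Manin criterion: a hyperelliptic curve $C\colon y^2=f(x)$ of genus $g$ over a perfect field of characteristic $p>2$, with $f$ squarefree of degree $2g+1$ or $2g+2$, is superspecial if and only if its Cartier operator on $H^0(C,\Omega^1_C)$ vanishes, which --- writing $f(x)^{(p-1)/2}=\sum_k c_k x^k$ --- is equivalent to the vanishing of every entry of the $g\times g$ Hasse--Witt matrix $\bigl(c_{\,pi-j}\bigr)_{1\le i,j\le g}$ (see the review in \cite{KH18}). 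Hence the superspecial hyperelliptic curves of genus $4$ in characteristic $p$ are exactly the nonsingular fibres of an explicit polynomial system in the coefficients of $f$, and to prove the theorem it suffices to show that this system has no solution over $\overline{\F_p}$ for $p=11,13$. Since the superspecial hyperelliptic locus is $\F_p$-rational, emptiness of its $\overline{\F_p}$-points is detected by a Gr\"obner basis computation over $\F_p$.

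First I would cut down the number of free parameters. Using the action of $\PGL_2$ on the $x$-line together with the scaling $y\mapsto\lambda y$, one normalises $f$: take $f$ monic of degree $2g+2$ (possible when $\deg f=2g+1$ by a $\PGL_2$-change of coordinates sending a non-branch point to $\infty$); the coprimality of $p$ and $2g+2$ lets one translate $x$ so as to kill the coefficient of $x^{2g+1}$, and a further scaling normalises one more coefficient; stratifying according to the configuration of branch points then yields a finite list of normal forms in which $f$ depends on only a handful of coefficients $a_1,\dots,a_r$. This is precisely the normalisation underlying the first algorithm, and its exhaustiveness --- together with the fact that superspecial curves in characteristic $p$ are defined over $\F_p$ or $\F_{p^2}$ (cf.\ \cite[Proposition~2.3.1]{KH17-2}) --- guarantees that no $\overline{\F_p}$-isomorphism class is missed, even when the normalising transformation is not defined over the prime field.

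Next, for each normal form I would expand $f(x)^{(p-1)/2}$ --- a polynomial of degree at most $(2g+2)(p-1)/2\le 60$ for $p=13$ --- read off the $4\times4$ block $\bigl(c_{\,pi-j}\bigr)_{1\le i,j\le 4}$ as polynomials in $a_1,\dots,a_r$, and form the ideal $I\subseteq\F_p[a_1,\dots,a_r,t]$ generated by these $16$ polynomials together with the relation $t\cdot\operatorname{disc}(f)=1$, the last equation enforcing that $f$ be squarefree (equivalently, that $C$ be nonsingular). A superspecial hyperelliptic curve exists in that stratum over $\overline{\F_p}$ exactly when $I\ne(1)$, and the claim to be verified is that computing a Gr\"obner basis of $I$ returns $1\in I$ in every stratum for $p=11$ and $p=13$; this gives the theorem.

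The hard part will be the Gr\"obner basis computations: in the worst strata there are several free parameters, and $f^{(p-1)/2}$ already has dozens of terms, so the systems are large and the computation can blow up. I expect this to be manageable by (i) choosing the normal forms so as to minimise $r$ stratum by stratum, (ii) exploiting the many linear relations among the $c_k$ to eliminate variables before running Buchberger's or the $F_4$ algorithm, and (iii) working over $\F_p$ rather than over $\Q$. A secondary but delicate point is the bookkeeping: one must check that the stratification is genuinely exhaustive and that curves forced into a different normal form (degenerate $f$, or curves with extra automorphisms) are accounted for in their own strata --- this combinatorial care, rather than any single computation, is where errors are easiest to make.
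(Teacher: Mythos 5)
Your proposal is correct and follows essentially the same route as the paper: the Cartier--Manin criterion (Corollary 2.2.2) turns superspeciality into the vanishing of the $16$ coefficients $c_{pi-j}$ of $f^{(p-1)/2}$, the coprimality of $p$ and $2g+2$ gives the monic normal form with no $x^{2g+1}$ term and a normalized $x^{2g}$ coefficient (Lemma 2.3.2), and Gr\"obner bases dispose of the resulting system. The only differences are organizational: the paper invokes the fact that superspecial curves descend to $\F_{p^2}$ and then enumerates all $\F_{p^2}$-rational solutions (brute-forcing several of the nine coefficients to keep the Gr\"obner computations tractable, and checking squarefreeness of $f$ a posteriori via splitting fields), whereas you test triviality of the ideal over $\overline{\F_p}$ with the discriminant inverted by a Rabinowitsch variable --- equally valid, though likely harder to push through in practice without the same specialization of coefficients.
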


\begin{theor}[\cite{KH18}, Theorem 2]\label{MainTheorem22}
There exist exactly five $($resp.\ $25)$ s.sp.\ hyperelliptic curves of genus $4$ over $\mathbb{F}_{17}$ $($resp.\ $\mathbb{F}_{17^2})$, up to isomorphism over $\mathbb{F}_{17}$ $($resp.\ $\mathbb{F}_{17^2})$.
Moreover, there exist exactly two s.sp.\ hyperelliptic curves of genus $4$ over the algebraic closure in characteristic $17$ up to isomorphism.
\end{theor}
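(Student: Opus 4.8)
The plan is to reduce the assertion to a finite computation controlled by the Hasse--Witt (Cartier--Manin) criterion and then by the isomorphism-classification algorithm of this paper. Write a genus-$4$ hyperelliptic curve over $\mathbb{F}_q$ as $C\colon y^2=f(x)$ with $f$ squarefree and $\deg f\in\{9,10\}$; this normal form is available because $q$ (which is $17$ or $17^2$) is prime to $2g+2=10$ and satisfies $q>2g+1=9$, so the first algorithm of the paper applies. Recall that $C$ is superspecial exactly when the $p$-power Frobenius kills $H^1(C,\mathcal{O}_C)$, i.e.\ the Hasse--Witt matrix is the zero matrix: writing $f(x)^{(p-1)/2}=\sum_k c_k x^k$, the curve is s.sp.\ iff $c_{ip-j}=0$ for all $1\le i,j\le g$. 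For $p=17$, $g=4$ this is a system of $16$ polynomial equations in the coefficients of $f$, together with the open condition $\operatorname{disc}(f)\neq 0$.

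Next I would eliminate the coordinate freedom. Genus-$4$ hyperelliptic curves are parametrized by the branch divisor of $10$ points on $\mathbb{P}^1$ modulo $\PGL_2$, together with the scaling $y\mapsto \lambda y$; so I normalize $f$ (translate to remove the degree-$9$ term, rescale $x$ and $f$, and use whatever residual freedom remains) to a shape with only a bounded number of free coefficients, carefully recording the finite stabilizer of each normalization and treating the $\deg f=9$ and $\deg f=10$ strata, and the degenerate branch-point configurations, separately so that every curve is represented and no curve is counted twice. Substituting the normal form into the $16$ Hasse--Witt equations gives an explicit zero-dimensional polynomial system over $\mathbb{F}_p$; I would solve it by a Gr\"obner basis computation in Magma, first over $\mathbb{F}_{17}$ and then over $\mathbb{F}_{17^2}$, producing the finite list of normalized $f$'s defining s.sp.\ curves.

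Finally I would sort this list into isomorphism classes. Two hyperelliptic curves $y^2=f(x)$ and $y^2=g(x)$ are $K$-isomorphic iff $f$ and $g$ differ by the natural action of (a quotient of) $\GL_2(K)$ — a M\"obius transformation of $x$ with the induced scaling of $y$ — so using the automorphism-group/isomorphism algorithm of the paper I would partition the solutions into $\mathbb{F}_{17}$-orbits, $\mathbb{F}_{17^2}$-orbits, and $\overline{\mathbb{F}_{17}}$-orbits, reading off the counts $5$, $25$, and $2$. The main obstacle is making the reduction genuinely exhaustive and rigorous: the normalization must provably cover all strata (including $\deg f=9$ and coincident branch points) with the stabilizers handled correctly, and the Gr\"obner basis computation over $\mathbb{F}_{17}$ and $\mathbb{F}_{17^2}$ is the step that could blow up and must be organized (e.g.\ by exploiting the normalization and symmetries) so that it terminates and its output is certified; once the solution set is in hand, reconstructing the curves and grouping them into isomorphism classes via the second algorithm is routine.
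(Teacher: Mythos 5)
Your proposal follows essentially the same route as the paper: reduce to a normal form $cy^2=f(x)$ with $\deg f=2g+2$ (Lemma \ref{ReductionSplitNode}; note that since $q>2g+1$ the degree-$9$ stratum you mention is unnecessary, and the residual scaling freedom is what forces the quadratic-twist parameter $c\in\{1,\epsilon\}$), impose the vanishing of the Cartier--Manin matrix entries as a multivariate system solved by Gr\"obner bases in Magma, and then partition the solutions into $\mathbb{F}_{17}$-, $\mathbb{F}_{17^2}$- and $\overline{\mathbb{F}_{17}}$-isomorphism classes via the $\GL_2\times K^\times$ action of Lemma \ref{lem:isom}. This is exactly the three-ingredient method (A)--(C) used to prove Propositions \ref{prop:q17} and \ref{prop:q17-2}, from which the theorem follows.
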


In particular, Theorem \ref{MainTheorem11} relaxes the restriction on non-hyperelliptic curves in \cite[Theorem B]{KH17-2} (or \cite[Main Theorem]{KH17a}).

\begin{coror}[\cite{KH18}, Corollary 2]
There exist exactly $30$ $($resp.\ nine$)$ s.sp.\ curves of genus $4$ over $\mathbb{F}_{11}$, up to isomorphism over $\mathbb{F}_{11}$ $($resp.\ $\overline{\mathbb{F}_{11}})$.
\end{coror}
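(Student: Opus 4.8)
The plan is to combine Theorem~\ref{MainTheorem11} with the already-known classification of superspecial \emph{non-hyperelliptic} curves of genus $4$ in characteristic $11$. First I would recall the basic dichotomy: every curve of genus $4$ over a field is either hyperelliptic or non-hyperelliptic, and this property is invariant under both isomorphism and base field extension. Hence the set of $\mathbb{F}_{11}$-isomorphism (resp.\ $\overline{\mathbb{F}_{11}}$-isomorphism) classes of superspecial curves of genus $4$ over $\mathbb{F}_{11}$ decomposes as a disjoint union of the hyperelliptic classes and the non-hyperelliptic classes, and counting the whole set amounts to adding the two tallies.

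By Theorem~\ref{MainTheorem11} the hyperelliptic part of this union is empty in characteristic $11$. Consequently the number of superspecial genus-$4$ curves over $\mathbb{F}_{11}$ (resp.\ over $\overline{\mathbb{F}_{11}}$) equals the number of superspecial non-hyperelliptic genus-$4$ curves over the same field. At this point I would invoke \cite[Theorem~B]{KH17-2} (equivalently \cite[Main Theorem]{KH17a}), which enumerates precisely these non-hyperelliptic classes and yields $30$ over $\mathbb{F}_{11}$ and nine over $\overline{\mathbb{F}_{11}}$, observing that the standing hypothesis in that statement---the nonexistence of a superspecial hyperelliptic curve of genus $4$ in characteristic $11$---is exactly what Theorem~\ref{MainTheorem11} now supplies unconditionally. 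Substituting, the counts $30$ and nine follow immediately.

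The only point requiring any care is bookkeeping: one must verify that the earlier non-hyperelliptic enumeration was performed over the same fields ($\mathbb{F}_{11}$ and $\overline{\mathbb{F}_{11}}$) and with the same notion of isomorphism that appears in the corollary, so that the disjoint-union count is literally the sum of the hyperelliptic and non-hyperelliptic tallies. Since the hyperelliptic tally is $0$, there is in fact no genuine obstacle here: all of the real difficulty was already absorbed into the proof of Theorem~\ref{MainTheorem11}, and the corollary is a direct consequence of it together with the prior non-hyperelliptic result.
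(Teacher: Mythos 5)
Your proposal is correct and is essentially the argument the paper intends: the corollary is obtained by splitting the superspecial genus-$4$ curves into hyperelliptic and non-hyperelliptic classes, noting that Theorem~\ref{MainTheorem11} makes the hyperelliptic part empty in characteristic $11$, and quoting the counts $30$ and nine from the non-hyperelliptic enumeration in \cite[Theorem B]{KH17-2}. Nothing further is needed.
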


Additional and new results, that are not given in \cite{KH18}, of this paper are as follows: 
\begin{enumerate}
\item Complete proofs of computational results in \cite{KH18},
\item New results on enumeration for $q=19^2$ and $23$ (Theorems \ref{MainTheorem33} and \ref{MainTheorem44} below),
\item Computation of automorphism groups of enumerated s.sp.\ hyperelliptic curves.
\end{enumerate}

\begin{theor}\label{MainTheorem33}
There exist exactly $12$ $($resp.\ $25)$ superspecial hyperelliptic curves of genus $4$ over $\mathbb{F}_{19}$ $($resp.\ $\mathbb{F}_{19^2})$ up to isomorphism over $\mathbb{F}_{19}$ $($resp.\ $\mathbb{F}_{19^2})$.
Moreover, there exist exactly two superspecial hyperelliptic curves of genus $4$ over the algebraic closure in characteristic $19$ up to isomorphism.
\end{theor}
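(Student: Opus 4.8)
The plan is to apply the two algorithms developed in this paper to the pairs $(g,q)=(4,19)$ and $(4,19^2)$; both are admissible since $\gcd(q,2g+2)=\gcd(q,10)=1$ and $q>2g+1=9$. By \cite[Proposition 2.3.1]{KH17-2} it suffices to treat these two values of $q$, because the number of isomorphism classes over $\F_{19^a}$, and hence over $\overline{\F_{19}}$, is then determined by the parity of $a$. Throughout, a genus-$4$ hyperelliptic curve over $\F_q$ is written $y^2=f(x)$ with $f\in\F_q[x]$ squarefree of degree $9$ or $10$, and two such curves are $K$-isomorphic precisely when some element of $\GL_2(K)$ acting on $x$, combined with a rescaling of $y$, carries one defining polynomial to the other up to the appropriate power of the linear cocycle.

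The first step is the enumeration algorithm. Using the $\PGL_2(\overline{\F_q})$-action on $x$ together with the rescaling $y\mapsto\lambda y$, one moves $f$ into a finite list of normal forms, each with only a few free coefficients, arranged so that every isomorphism class meets at least one normal form. On each normal form one imposes the superspecial condition: $y^2=f(x)$ is superspecial if and only if its Cartier--Manin (Hasse--Witt) matrix vanishes, and by the classical formula this is the $4\times 4$ matrix whose $(i,j)$ entry is the coefficient of $x^{pi-j}$ in $f(x)^{(p-1)/2}$. Equating all $16$ entries to zero gives, after the normalization, a zero-dimensional polynomial system over $\F_q$; one solves it by a Gr\"obner basis computation in Magma, discards the solutions for which $f$ is not squarefree (singular curves), and is left with a finite set of superspecial curves defined over $\F_q$ or over small extensions of it.

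The second step classifies this finite set up to isomorphism over $\F_{19}$, over $\F_{19^2}$ and over $\overline{\F_{19}}$ by running the paper's automorphism/isomorphism algorithm, which realizes each isomorphism concretely inside a quotient of $\GL_2$. Sorting the surviving curves into orbits gives the claimed counts $12$, $25$ and $2$; along the way one records the precise field of definition of each curve and of each isomorphism, and verifies that the two $\overline{\F_{19}}$-classes do descend as stated. Here $p=19\ge 2g+1$, so Ekedahl's bound \cite[Theorem 1.1]{Ekedahl} does not force the list to be empty, which is consistent with a positive count.

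The hard part is the enumeration step, for two reasons. First, completeness: the stratification into normal forms must cover \emph{all} ramification-point configurations, including the degenerate ones in which the chosen normalizing points coalesce or lie in special positions, or else some isomorphism classes are missed; this is the delicate bookkeeping on which the rigor of the count rests. Second, the Gr\"obner basis computations over $\F_{19}$ and its extensions, in the coefficient variables that survive the normalization and with input $f(x)^{9}$ of degree up to $90$, are the computational bottleneck and must be organized stratum by stratum to stay feasible.
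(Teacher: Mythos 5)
Your overall strategy --- impose vanishing of the Cartier--Manin matrix via the coefficients of $x^{pi-j}$ in $f^{(p-1)/2}$, solve the resulting multivariate system by Gr\"obner bases, discard the non-squarefree $f$, and then sort the survivors into isomorphism classes over $\F_{19}$, $\F_{19^2}$ and $\overline{\F_{19}}$ by an explicit $\GL_2$-based isomorphism test --- is exactly the paper's. The one step where you genuinely diverge, and where there is a real gap, is the reduction to normal forms. You propose to use the $\PGL_2(\overline{\F_q})$-action on $x$ (with rescaling of $y$) to move $f$ into a short list of normal forms. The paper explicitly rejects this for enumeration over a non-closed field (Remark \ref{RemarkQuadraticTwist}(1)): the ramification points need not be $\F_q$-rational, so the $\overline{\F_q}$-transformation carrying $f$ to its normal form need not preserve the $\F_q$-isomorphism class, and the normal form need not even be defined over $\F_q$. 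Hence ``every isomorphism class meets at least one normal form'' cannot be arranged this way when the classes in question are $\F_q$-isomorphism classes --- which is precisely the count ($12$ over $\F_{19}$, $25$ over $\F_{19^2}$) you are trying to certify. The paper instead uses a purely $\F_q$-rational reduction (Lemma \ref{ReductionSplitNode}): translate to kill the $x^{2g+1}$ term, scale to make $f$ monic with $x^{2g}$-coefficient $b\in\{0,1,\epsilon\}$, and, crucially, retain the quadratic-twist parameter $c\in\{1,\epsilon\}$ in $cy^2=f(x)$. Your normal forms contain only $y^2=f(x)$; the twists $\epsilon y^2=f(x)$ are superspecial exactly when the untwisted curves are, but they are a priori distinct $\F_{19}$-isomorphism classes and must be carried into the isomorphism-testing stage (the paper's set $\mathcal{H}_0$ contains both $c=1$ and $c=\epsilon$). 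Omitting them leaves the $\F_{19}$- and $\F_{19^2}$-counts unjustified, even though the $\overline{\F_{19}}$-count of two would be unaffected.

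Two smaller points. Since $q>2g+1$, the paper fixes $\deg f=2g+2=10$ throughout, so you need not carry the degree-$9$ case. And the Gr\"obner computation is run over $\F_q$ with the field equations $a_i^q=a_i$ adjoined, so its solutions lie in $\F_q$ by construction; there is no residual ``finite set of superspecial curves defined over small extensions of $\F_q$'' to account for. Your remark that the computation must be organized ``stratum by stratum'' does correspond to the paper's device of brute-forcing a few of the coefficients $a_i$ before invoking the Gr\"obner solver, so that part of the plan is sound.
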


\begin{theor}\label{MainTheorem44}
There exist exactly $14$ superspecial hyperelliptic curves of genus $4$ over $\mathbb{F}_{23}$ up to isomorphism over $\mathbb{F}_{23}$.
Moreover, there exist exactly four superspecial hyperelliptic curves of genus $4$ over $\mathbb{F}_{23}$ up to isomorphism over the algebraic closure.
\end{theor}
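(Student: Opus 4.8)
The plan is to execute the enumeration algorithm of this paper for the pair $(g,q)=(4,23)$. Since $23>2\cdot 4+1$ and $\gcd(23,\,2\cdot 4+2)=1$, every genus-$4$ hyperelliptic curve over $\F_{23}$ has a model $C\colon y^2=f(x)$ with $f\in\F_{23}[x]$ squarefree of degree $9$ or $10$. Writing $f(x)^{(p-1)/2}=\sum_{k}c_k x^k$ with $p=23$, the Hasse--Witt (Cartier--Manin) matrix of $C$ is $M=\bigl(c_{ip-j}\bigr)_{1\le i,j\le 4}$, and $C$ is superspecial exactly when $M=0$. Thus superspeciality becomes an explicit system of $16$ polynomial equations over $\F_{23}$ in the (at most $11$) coefficients of $f$.

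Second, I would cut down the number of free parameters using the isomorphism action. Two models $y^2=f(x)$ and $y^2=\tilde f(x)$ are $\F_{23}$-isomorphic iff they differ by the action of $\PGL_2(\F_{23})$ on $x$ together with a scaling of $y$, and $\overline{\F_{23}}$-isomorphic iff they differ by the analogous action over $\overline{\F_{23}}$. Stratifying according to the behaviour at infinity ($\deg f=10$, or $\deg f=9$, i.e.\ a rational Weierstrass point at $\infty$) and according to the configuration of $\F_{23}$-rational branch points, one normalizes $f$ so that three branch coordinates are pinned (say $0,1,\infty$), leaving only a few coefficients free. On each stratum the superspecial system then becomes zero-dimensional over $\F_{23}$; I would solve it by a Gr\"obner basis computation in Magma --- or, when few parameters remain, by exhaustion over $\F_{23}$ --- and discard every solution for which $f$ fails to be squarefree.

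Third, from the finite list of surviving models I would read off the $\F_{23}$-isomorphism classes by letting the residual stabilizer inside $\PGL_2(\F_{23})$ act, and the geometric classes by repeating this over $\overline{\F_{23}}$; the automorphism-group algorithm of this paper is invoked to certify non-isomorphism and to match each arithmetic class with its geometric class, i.e.\ to organize the twists within a given $\overline{\F_{23}}$-class. Carrying this out is expected to produce exactly $14$ classes over $\F_{23}$, grouped into exactly $4$ classes over $\overline{\F_{23}}$.

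I expect the main obstacle to be computational rather than conceptual. Relative to the already-settled cases $p=17,19$, taking $p=23$ both multiplies the number of strata (more possible rational-root patterns) and sharply raises the cost of the Gr\"obner basis / search step on each stratum, since the degrees of the $16$ defining equations grow with $p$. The delicate points are therefore to choose, on each stratum, a normalization that keeps the zero-dimensional solving tractable in Magma, and to verify that the chosen strata exhaust all cases without overlap, so that no superspecial curve is missed or double-counted.
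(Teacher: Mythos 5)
Your overall strategy --- express superspeciality as the vanishing of the Cartier--Manin matrix read off from $f^{(p-1)/2}$, solve the resulting polynomial system over $\F_{23}$ by Gr\"obner bases combined with partial exhaustion, discard the non-squarefree $f$, and then sort the survivors into $\F_{23}$- and $\overline{\F_{23}}$-isomorphism classes --- is exactly the route the paper takes (Proposition \ref{prop:q23} via the three ingredients (A)--(C) of Section \ref{sec:enume}). The genuine problem is your normalization step. Pinning three branch coordinates at $0,1,\infty$ requires the branch divisor to contain at least three $\F_{23}$-rational points; a genus-$4$ hyperelliptic curve over $\F_{23}$ may have $0$, $1$ or $2$ rational Weierstrass points, and those strata are not covered by the reduction you describe, so the enumeration over $\F_{23}$ would be incomplete. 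The paper flags precisely this obstruction in Remark \ref{RemarkQuadraticTwist}(1): translating three ramification points to $\{0,1,\infty\}$ is a good reduction over an algebraically closed field but cannot be adopted over $\F_q$. What is used instead is the unconditional elementary reduction of Lemma \ref{ReductionSplitNode}: since $p$ is prime to $2g+2$, every curve has a model $c y^2 = x^{10} + b x^{8} + a_7x^7+\cdots+a_0$ with $c\in\{1,\epsilon\}$ and $b\in\{0,1,\epsilon\}$ for a fixed non-square $\epsilon$, obtained by translating $x$ to kill the $x^{9}$ term and rescaling $x$ and $y$; the remaining coefficients are handled by brute-forcing a few of them and solving the residual system by Gr\"obner bases. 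If you replace your branch-point normalization by this reduction (or otherwise supply a normalization valid on the strata with fewer than three rational branch points), the rest of your plan goes through.

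Two smaller points. First, the paper works only with $\deg f=10$ models (available because $q>2g+1$), recording the quadratic twist in the constant $c$ rather than in a non-square leading coefficient; your degree-$9$/degree-$10$ dichotomy is harmless but not what is done. Second, for the isomorphism sorting the paper does not compute residual stabilizers in $\PGL_2$; it tests each pair of candidates directly for the existence of $(h,\lambda)\in\GL_2(K)\times K^\times$ with $F_1(h\cdot{}^t(X,Z))=\lambda^2F_2(X,Z)$ as in Lemma \ref{lem:isom}, by checking solvability of the corresponding system with Gr\"obner bases (adding field equations when $K=\F_{23}$). Either route is fine once the candidate list is provably complete and finite, which is exactly what your normalization step currently fails to guarantee.
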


\if 0
\renewcommand{\arraystretch}{1.5}
\begin{table}[t]
\centering{
\caption{Main references to enumerations of isomorphism classes of superspecial curves of genus $g=4$ over $\mathbb{F}_q$, where $q$ is a power of a prime $p$.}
\label{table:3}
\scalebox{0.97}{
\begin{tabular}{c||c|c||c||c|c} \hline
~$q$~ & ~Non-Hyperelliptic~ & ~Hyperelliptic~  & ~$q$~ & ~Non-Hyperelliptic~ & ~Hyperelliptic~  \\ \hline
\multirow{2}{*}{$p \leq 3$} &   \multicolumn{2}{c||}{Non-Existence} & ~$17^{2 e - 1}$ & \multirow{6}{*}{Not yet} &  \multirow{2}{*}{{\bf Thm.\ 2}} \\ 
 & \multicolumn{2}{c||}{by Ekedahl~\cite{Ekedahl}} & ~$17^{2 e}$~~~ & &  \\ \cline{1-4} \cline{6-6}
~$5^{2 e - 1}$ & \cite[Thm.\ A]{KH17-2} &  & ~$19^{2 e - 1}$ & & \multirow{2}{*}{{\bf Thm.\ 3}} \\ \cline{2-2} 
~$5^{2 e}$~~~ & \cite[Thm.\ A]{KH16} & Non-Existence & ~$19^{2 e}$~~~ & & \\ \cline{1-2} \cline{4-4} \cline{6-6}
~$7^{2 e - 1}$ & Non-Existence & by Ekedahl~\cite{Ekedahl} & ~$23^{2 e - 1}$ & & {\bf Thm.\ 4} \\ \cline{6-6}
~$7^{2 e}$~~~ & by \cite[Thm.\ B]{KH16} & & ~$23^{2 e}$~~~ &  & Not yet\\ \hline
~$11^{2 e - 1}$ & \cite[Thm.\ B]{KH17-2} &    & \multirow{4}{*}{~$p \geq 29$~} & \multicolumn{2}{c}{}  \\ \cline{2-2}
~$11^{2 e}$~~~ &  & Non-Existence & & \multicolumn{2}{c}{Not yet}  \\ \cline{1-1}
~$13^{2 e - 1}$ & Not yet & by {\bf Thm.\ 1} & & \multicolumn{2}{c}{(Existences for some $p$, cf.\ \cite{ManyPoints}, \cite{Taf})} \\ 
~$13^{2 e}$~~~ &  & & &  \multicolumn{2}{c}{}  \\ \hline
\end{tabular}
}}
\end{table}
\fi

The rest of this paper is organized as follows.
Section \ref{sec:pre} gives a review of general facts on hyperelliptic curves over finite fields,
In Section \ref{sec:enume}, we review the enumeration method given in \cite{KH18}.
The method consists of the following three ingredients: (A) Algorithm to list up s.sp.\ hyperelliptic curves, (B) Reduction of defining equations of hyperelliptic curves, and (C) Isomorphism testing.
Section \ref{sec:main} gives complete proofs of computational results in \cite{KH18}, and new results on enumeration for $q=19^2$ and $23$.
Section \ref{sec:aut} studies automorphism groups of enumerated s.sp.\ hyperelliptic curves.
Specifically, we give an algorithm to compute the automorphism group of a (not necessarily s.sp.) hyperelliptic curve.
Note that in this paper we do not mention the asymptotic complexity but the practicality of our algorithms only.

\section{Preliminaries}\label{sec:pre}

In this section, we review
a realization of hyperelliptic curves,
a criterion for their superspeciality and
a method to enumerate s.sp.\ hyperelliptic curves.
\subsection{Hyperelliptic curves}\label{subsec:hype}
Let $K$ be a field.
Let $C$ be a hyperelliptic curve over $K$, i.e.,
a curve over $K$ admitting a morphism over $K$ of degree $2$
from $C$ to the projective line $\mathbf{P}^1$.
As seen in \cite[Subsection 2.1]{KH18},
if the cardinality of $K$ is greater than $2g+1$,
then $C$ is realized as
the desingularization of the homogenization of
\begin{equation}
y^2 = f(x),
\end{equation}
where $f(x)$ is a polynomial over $K$
of degree $2g+2$
with non-zero discriminant.



The next lemma tells us when two hyperelliptic curves $C_1$ and $C_2$ are isomorphic.

\begin{lem}[cf. \cite{KH18}, Lemma 2.1]\label{lem:isom}
Let $f_1(x)$ and $f_2(x)$ be elements of $K[x]$ of degree $2g+2$.
Let $C_1$ and $C_2$ be the hyperelliptic curves over $K$ defined by
$y^2=f_1(x)$ and $y^2 = f_2(x)$ respectively.
Set $F_i(X,Z) = Z^{2g+2}f_i(X/Z) \in K[X,Z]$.
Let $k$ be a field containing $K$.
There exists a $k$-isomorphism from $C_1$ to $C_2$
if and only if there exists $(h,\lambda)\in \operatorname{GL}_2(k)\times k^\times$ such that 
$F_1(h \cdot {}^t(X,Z)) = \lambda^2 F_2(X,Z)$.
%
\end{lem}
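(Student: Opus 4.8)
The plan is to translate ``$k$-isomorphism of curves'' into the stated identity of binary forms, using the structural fact that a hyperelliptic curve of genus $g\ge 2$ (the case of interest) carries a unique degree-$2$ map to $\mathbf{P}^1$, so that every isomorphism is compatible with these maps. Write $h=\begin{pmatrix}a&b\\c&d\end{pmatrix}$, let $x_i$ denote the coordinate on the target $\mathbf{P}^1$ of $C_i\to\mathbf{P}^1$, and note that $F_i(1,0)$ equals the leading coefficient of $f_i$, which is nonzero since $\deg f_i=2g+2$; hence each $V(F_i)\subset\mathbf{P}^1$ is reduced (the discriminant of $f_i$ is nonzero) and avoids $[1:0]$.

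For ($\Leftarrow$): given $(h,\lambda)$ with $F_1(h\cdot{}^t(X,Z))=\lambda^2F_2(X,Z)$, dehomogenize at $Z=1$ to get $(cx+d)^{2g+2}f_1\!\left(\tfrac{ax+b}{cx+d}\right)=\lambda^2f_2(x)$. Then $(x,y)\mapsto\left(\tfrac{ax+b}{cx+d},\ \tfrac{\lambda y}{(cx+d)^{g+1}}\right)$ sends $C_2$ to $C_1$ (square the second coordinate and apply the identity), defining a rational, hence regular, map $\psi\colon C_2\to C_1$ between smooth projective curves. It is birational: from $\psi^*x_1=\tfrac{ax_2+b}{cx_2+d}$ one recovers $x_2$ (the M\"obius transformation is invertible, as $\det h\ne0$), and then $y_2$ from $\psi^*y_1=\tfrac{\lambda y_2}{(cx_2+d)^{g+1}}$, so $\psi^*\colon k(C_1)\to k(C_2)$ is surjective. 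A birational morphism of smooth projective curves is an isomorphism, so $C_2\cong_k C_1$.

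For ($\Rightarrow$): let $\phi\colon C_1\xrightarrow{\sim}C_2$ be a $k$-isomorphism. Since $g\ge2$ the hyperelliptic pencil is unique, so $\phi$ conjugates the hyperelliptic involution $\iota_1$ to $\iota_2$ and descends to $\bar\phi\colon\mathbf{P}^1=C_1/\iota_1\to C_2/\iota_2=\mathbf{P}^1$, which in homogeneous coordinates is ${}^t(X,Z)\mapsto h\cdot{}^t(X,Z)$ for some $h\in\GL_2(k)$; equivalently $\phi^*x_2=\tfrac{ax_1+b}{cx_1+d}$. As $\phi^*y_2$ is anti-invariant for $\iota_1$, it lies in $k(x_1)\,y_1$, say $\phi^*y_2=s(x_1)\,y_1$ with $s\in k(x_1)^\times$. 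Squaring $(\phi^*y_2)^2=f_2(\phi^*x_2)$ and clearing denominators gives $s(x_1)^2(cx_1+d)^{2g+2}f_1(x_1)=F_2(ax_1+b,cx_1+d)$. Because $\bar\phi$ carries the branch divisor $V(F_1)$ of $C_1\to\mathbf{P}^1$ onto that of $C_2$, we get $V\big(F_2(h\cdot{}^t(X,Z))\big)=V(F_1)$; both are squarefree binary forms of degree $2g+2$ (the former because $h\in\GL_2$ preserves squarefreeness and degree), so $F_2(h\cdot{}^t(X,Z))=\nu\,F_1(X,Z)$ for some $\nu\in k^\times$. Substituting this back yields $\big((cx_1+d)^{g+1}s(x_1)\big)^2=\nu$; an element of $k(x_1)$ with constant square has trivial divisor, hence is a constant $\lambda\in k^\times$, and $\nu=\lambda^2$. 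Thus $F_2(h\cdot{}^t(X,Z))=\lambda^2F_1(X,Z)$, and replacing $(h,\lambda)$ by $(h^{-1},\lambda^{-1})$ gives exactly the form in the statement.

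The main obstacle is the last step of ($\Rightarrow$): pinning down $s(x_1)$ — that it has no spurious zeros or poles and that the resulting scalar $\nu$ is a square in $k$. This is precisely where $\deg f_i=2g+2$ enters (there is no ramification over $[1:0]$, so the pole of the M\"obius transformation at $-d/c$ is harmless) together with the nonvanishing of the discriminant of $f_i$. The descent of $\phi$ to $\mathbf{P}^1$ relies on uniqueness of the $g^1_2$, which is why $g\ge2$ is needed; for $g\le1$ the statement as written fails, so that hypothesis is implicit in the paper's setting.
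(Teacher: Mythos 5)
Your proof is correct and follows essentially the same route as the argument of [KH18, Lemma 2.1] that the paper cites (and re-invokes in Section 4.1): every $k$-isomorphism commutes with the unique hyperelliptic involution for $g\ge 2$, hence descends to an element of $\PGL_2(k)$ on the base $\mathbf{P}^1$ and acts on $y$ by $\lambda/(\gamma x+\delta)^{g+1}$, which is exactly the normal form the lemma encodes. Your explicit justification that the scalar $\nu$ is a square via the constancy of $s(x_1)(cx_1+d)^{g+1}$, and your remark that $g\ge 2$ is implicitly required, are both accurate.
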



\subsection{Cartier-Manin matrix and superspeciality}\label{subsec:HW}

Let $K$ be a perfect field and let $\overline K$ denote the algebraic closure of $K$.
Let $C$ be a nonsingular projective curve over $K$.
We say that $C$ is {\it superspecial}
if its Jacobian $\operatorname{Jac}(C)$ is $\overline K$-isomorphic to the product of
some supersingular elliptic curves.
For a curve $C$ over $K$,
its {\it Cartier-Manin matrix} is defined as a matrix representing the Cartier operator on the space $H^0 (C, \mathrm{\Omega}^1_C)$ of holomorphic differentials of $C$ (cf.\ \cite[Section 2]{Yui}), which 
is uniquely determined as soon as we choose a basis of $H^0 (C, \mathrm{\Omega}^1_C)$.
Here is a well-known method (cf.\ \cite{Gonz}, \cite{Manin-C}, \cite[Section 2]{Yui}) to compute a Cartier-Manin matrix of a hyperelliptic curve.

\begin{prop}\label{prop:HW}
Let $C$ be a hyperelliptic curve $y^2 = f(x)$ of genus $g$ over $K$, where $d = \mathrm{deg} (f)$ is either $2 g +1$ or $2 g + 2$.
Then the $g \times g$ matrix whose $(i,j)$-entry is the coefficient of $x^{p i - j}$ in $f^{(p-1)/2}$ for $1 \leq i , j \leq g$ is a Cartier-Manin matrix of $C$.
\end{prop}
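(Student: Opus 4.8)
The plan is to recall the definition of the Cartier operator on $H^0(C,\Omega^1_C)$ and compute its action on the standard basis of holomorphic differentials of the hyperelliptic curve $y^2 = f(x)$, then read off the matrix of coefficients. First I would fix the basis. For a hyperelliptic curve $y^2 = f(x)$ of genus $g$ with $\deg f \in \{2g+1, 2g+2\}$, the space $H^0(C,\Omega^1_C)$ has the well-known basis $\omega_j = x^{j-1}\, dx/y$ for $j = 1, \dots, g$. I would cite this standard fact (it follows from the Riemann–Roch computation of regular differentials on the affine model and a check at the points above $x=\infty$, using $\deg f$ in the stated range), so the genuine content is computing the Cartier operator on each $\omega_j$.

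Next I would recall the two defining properties of the (modified) Cartier operator $\mathscr{C}$ on a function field of characteristic $p$: it is $p^{-1}$-linear, it kills exact differentials and more generally $\mathscr{C}(x^{i}\,dx) = 0$ unless $p \mid i+1$, in which case $\mathscr{C}(x^{ap-1}\,dx) = x^{a-1}\,dx$; and it satisfies $\mathscr{C}(u^p \eta) = u\,\mathscr{C}(\eta)$. The key maneuver is to rewrite $\omega_j = x^{j-1}\,dx/y$ so that the denominator becomes a $p$-th power. Multiplying numerator and denominator by $y^{p-1}$ and using $y^2 = f(x)$, we get
\[
\omega_j = \frac{x^{j-1} y^{p-1}}{y^p}\,dx = \frac{1}{y^p}\, x^{j-1} f(x)^{(p-1)/2}\, dx.
\]
Now expand $x^{j-1} f(x)^{(p-1)/2} = \sum_m c_m x^m$ where $c_m$ is the coefficient of $x^{m-j+1}$ in $f^{(p-1)/2}$. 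Applying $\mathscr{C}$ and pulling $1/y$ out of the $p$-th power (since $\mathscr{C}(y^{-p}\eta) = y^{-1}\mathscr{C}(\eta)$), only the monomials $x^m$ with $m \equiv p-1 \pmod p$, say $m = pi - 1 + $ (adjust indexing), survive, and each contributes $c_m x^{(m+1)/p - 1}\, dx / y = c_m \,\omega_{(m+1)/p}$. Writing $m+1 = pi$ so the surviving term is $\omega_i$, the coefficient is $c_{pi-1}$, which is the coefficient of $x^{pi-1-(j-1)} = x^{pi-j}$ in $f^{(p-1)/2}$. Hence $\mathscr{C}(\omega_j) = \sum_{i=1}^{g} a_{ij}\, \omega_i$ with $a_{ij}$ the coefficient of $x^{pi-j}$ in $f^{(p-1)/2}$, which is exactly the asserted matrix (up to the transpose convention fixed by how one defines ``matrix representing'' a $p^{-1}$-linear map, which I would state explicitly to match Proposition~\ref{prop:HW}).

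The main obstacle, and the point requiring care rather than cleverness, is the bookkeeping at the boundary of the index range: one must verify that no surviving term $\omega_i$ has $i > g$ or $i < 1$, i.e., that $x^{j-1} f^{(p-1)/2}$ has degree small enough that every monomial $x^{pi-1}$ appearing with $1 \le j \le g$ forces $1 \le i \le g$. Since $\deg(x^{j-1} f^{(p-1)/2}) \le (g-1) + (2g+2)(p-1)/2 = (g-1) + (g+1)(p-1) = pg + p - 2g - 2$, one checks $pi - 1 \le pg + p - 2g - 2$ gives $i \le g + (p-2g-1)/p < g+1$, using the hypothesis $p \ge 2g+1$ (available since a superspecial — indeed any — hyperelliptic curve with this model has $\deg f = 2g+2$ coprime to $p$; more simply $p > 2g+1$ is in force from the running assumptions, and in any case Proposition~\ref{prop:HW} is applied in that regime). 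The lower bound $i \ge 1$ is automatic since $pi - j \ge 0$ forces $i \ge 1$. I would also remark that the same computation handles the $\deg f = 2g+1$ case verbatim, since only the degree bound changes and the conclusion $i \le g$ still holds. With these range checks in place the identification of $\mathscr{C}$ with the stated coefficient matrix is complete.
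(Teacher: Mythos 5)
Your proof is correct and is precisely the standard computation that the paper's citations (Yui, Manin, Gonz\'alez) point to; the paper itself states Proposition~\ref{prop:HW} without proof as a well-known fact, so there is no competing argument to compare against. Two harmless slips to tidy: the $p^{-1}$-linearity means the surviving term is $c_{pi-1}^{1/p}\,\omega_i$ rather than $c_{pi-1}\,\omega_i$ (this is exactly the transpose/convention issue you flag in defining the ``matrix representing'' a $p^{-1}$-linear map), and the degree bound should read $(g-1)+(g+1)(p-1)=gp+p-2$, not $gp+p-2g-2$; the corrected bound gives $pi-1\le gp+p-2$, hence $i\le g+1-1/p<g+1$, so the range check $i\le g$ holds without invoking $p\ge 2g+1$ at all.
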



The next corollary follows immediately from the fact that
$C$ is superspecial if and only if
the Cartier operator on the cohomology group $H^0(C,\mathrm{\Omega}^1_C)$ is zero (cf.\ \cite{Nygaard}).

\begin{cor}\label{cor:HW}
Let $C$ be a hyperelliptic curve $y^2 = f(x)$ of genus $g$ over $K$.
Then $C$ is superspecial if and only if the coefficients of $x^{p i - j}$ in $f^{(p-1)/2}$ are equal to $0$ for all integers $i,j$ with $1 \leq i,j \leq g$.
\end{cor}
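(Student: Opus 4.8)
The plan is to obtain the statement as an immediate consequence of Proposition~\ref{prop:HW} together with the characterization of superspeciality recalled just above it. First I would invoke the cited fact (from \cite{Nygaard}) that $C$ is superspecial if and only if the Cartier operator $\mathcal{C}$ on $H^0(C,\Omega^1_C)$ is the zero map. Since $\mathcal{C}$ is $p^{-1}$-semilinear over $K$, a change of $K$-basis $S \in \GL_g(K)$ of $H^0(C,\Omega^1_C)$ transforms the matrix $M$ of $\mathcal{C}$ by a twisted conjugation of the shape $M \mapsto S^{-1} M S^{(1/p)}$; in particular the condition $M = 0$ does not depend on the chosen basis, and it is equivalent to $\mathcal{C} = 0$. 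Hence $C$ is superspecial if and only if the Cartier--Manin matrix supplied by Proposition~\ref{prop:HW} (taken with respect to the standard basis $\{x^{j-1}\,dx/y\}_{1 \le j \le g}$ of $H^0(C,\Omega^1_C)$) is the zero matrix.

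Next I would simply read off the entries of that matrix from Proposition~\ref{prop:HW}: its $(i,j)$-entry is the coefficient of $x^{pi-j}$ in $f^{(p-1)/2}$, for $1 \le i,j \le g$. Therefore the matrix vanishes precisely when all $g^2$ of these coefficients are zero, which is exactly the asserted criterion, so the equivalence follows.

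I do not expect any real obstacle here: all the substantive content is already contained in Proposition~\ref{prop:HW} and in the superspeciality criterion quoted from \cite{Nygaard}, and the corollary is their formal combination. The only point warranting a remark is the basis-independence of the condition ``the Cartier--Manin matrix is zero'', which, as noted above, follows from the $p^{-1}$-semilinearity of the Cartier operator and requires no computation.
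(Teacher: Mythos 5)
Your proposal is correct and follows the same route as the paper: the corollary is obtained by combining the characterization of superspeciality via vanishing of the Cartier operator (the fact cited from Nygaard) with the explicit Cartier--Manin matrix of Proposition~\ref{prop:HW}. Your additional remark on the basis-independence of the vanishing condition, via the $p^{-1}$-semilinear transformation rule, is a sensible clarification but does not change the argument.
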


\subsection{Ingredients to enumerate superspecial hyperelliptic curves}\label{sec:enume}

Assume that $K$ is the finite filed $\mathbb{F}_q$ or its algebraic closure $\overline{\mathbb{F}_q}$, where $q$ is a power of an odd prime $p$.
This subsection reviews a method in \cite{KH18} to enumerate s.sp.\ hyperelliptic curves over $\mathbb{F}_q$.
The same method shall be applied to prove main theorems (Theorems \ref{MainTheorem33} and \ref{MainTheorem44}) in this paper, and it consists of the following three ingredients described precisely in \cite[Section 3]{KH18}: (A) Algorithm to list up superspecial hyperelliptic curves, (B) Reduction of defining equations of hyperelliptic curves, and (C) Isomorphism testing.
Since concrete algorithms for (A) and (C) and a proof of (B) are already given in \cite{KH18}, we here describe only the idea of each ingredient.

\paragraph{(A) Algorithm to list up superspecial hyperelliptic curves:}

In \cite[Section 3.1]{KH18}, we constructed an algorithm with a pseudocode to list up all s.sp.\ hyperelliptic curves of genus $g$ over $\mathbb{F}_q$ for a given $(g,q)$.
The idea is reducing the enumeration of s.sp.\ curves into solving multivariate systems over finite fields (the same idea is also used in a series of papers \cite{KH16}, \cite{KH17-2}, \cite{KH17a}, \cite{KH18tri}).
By Lemma \ref{ReductionSplitNode} below, any hyperelliptic curve of genus $g$ over $\mathbb{F}_q$ is given by the equation $c y^2 = f(x)$ for $c = 1$ or $\epsilon$ with $\epsilon \in  \mathbb{F}_q^\times \smallsetminus (\mathbb{F}_q^\times)^2$, where $f \in \mathbb{F}_q[x]$ is a degree $(2 g + 2)$-polynomial of the form \eqref{eq:reduction} with non-zero discriminant.
For each $c$ and $b$, we derive a multivariate system of algebraic equations from the condition that $c y^2 = f(x)$ is superspecial (i.e., the Cartier-Manin matrix is zero), that is,
\[
\left( \mbox{The coefficient of } x^{p i - j} \mbox{ in } f^{(p-1)/2} \right) = 0
\]
for each $1 \leq i, j \leq g$, where we regard unknown coefficients $a_i$ for $0 \leq i \leq 2 g - 1$ as indeterminates.
For each root of the system, we check whether $f$ has no double root in $\overline{\mathbb{F}_q}$ by constructing the minimal splitting field of $f$.
In this way, we can collect all $f$ of the form \eqref{eq:reduction} with non-zero discriminant such that $c y^2 = f(x)$ is superspecial.

\paragraph{(B) Reduction of defining equations of hyperelliptic curves:}

In \cite[Section 3.2]{KH18}, we gave the following elementary reduction of defining equations of hyperelliptic curves:

\begin{lem}[\cite{KH18}, Lemma 2]\label{ReductionSplitNode}
Assume that $p$ and $2g+2$ are coprime.
Let $\epsilon \in K^\times \smallsetminus (K^\times)^2$.
Any hyperelliptic curve $C$ of genus $g$ over $K$ is the desingularization of
the homogenization of
\begin{eqnarray}
c y^2 = x^{2g+2} + b x^{2g} + a_{2g-1}x^{2g-1} + \cdots + a_1 x + a_0 \label{eq:reduction}
\end{eqnarray}
for $a_i \in K$ for $i=0,1,\ldots, 2g-1$ where $b= 0, 1,\epsilon$ and $c=1,\epsilon$.
\end{lem}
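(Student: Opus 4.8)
The plan is to reduce the assertion to an elementary normalization of the defining polynomial. By the realization recalled in Subsection~\ref{subsec:hype}, $C$ is the desingularization of the homogenization of an affine equation $y^{2}=\tilde f(x)$ with $\tilde f\in K[x]$ of degree $2g+2$ and non-zero discriminant; it therefore suffices to transform $\tilde f$ into the shape prescribed in \eqref{eq:reduction} by a finite sequence of substitutions $(x,y)\mapsto(\alpha x+\beta,\gamma y)$ with $\alpha,\gamma\in K^{\times}$ and $\beta\in K$. Each such substitution is an invertible change of variables over $K$, hence induces a $K$-isomorphism of the associated hyperelliptic curves (the case of Lemma~\ref{lem:isom} with $h$ upper triangular), and it keeps the roots of the defining polynomial pairwise distinct and its leading coefficient nonzero, so the condition of having non-zero discriminant is preserved throughout. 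Recall finally that, $K$ being a finite field of odd characteristic, $K^{\times}/(K^{\times})^{2}$ has order $2$, its two classes being represented by $1$ and $\epsilon$.

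I would carry this out in three steps, in the following order. \emph{Step 1: the leading coefficient.} Write $\tilde f=a_{2g+2}f_{0}$ with $f_{0}$ monic and $a_{2g+2}\in K^{\times}$; under $y\mapsto u\,y$ the equation $y^{2}=\tilde f(x)$ becomes $c\,y^{2}=f_{0}(x)$ with $f_{0}$ monic of degree $2g+2$, and as $u$ ranges over $K^{\times}$ the scalar $c$ ranges over the square class of $a_{2g+2}$, so we may take $c\in\{1,\epsilon\}$. \emph{Step 2: the $x^{2g+1}$-term.} This is the only place where $\gcd(p,2g+2)=1$ is used: it makes $2g+2$ invertible in $K$, so the translation $x\mapsto x-a_{2g+1}/(2g+2)$ keeps $f_{0}$ monic and kills its coefficient of $x^{2g+1}$, leaving $c\,y^{2}=x^{2g+2}+b\,x^{2g}+a_{2g-1}x^{2g-1}+\cdots+a_{0}$ with $b,a_{i}\in K$. \emph{Step 3: the $x^{2g}$-term.} Apply $x\mapsto\mu x$; since $\mu^{2g+2}$ is a square, dividing the equation by it and rescaling $y$ accordingly keeps $c\in\{1,\epsilon\}$, does not reintroduce an $x^{2g+1}$-term, and replaces $b$ by $\mu^{-2}b$. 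If $b=0$ nothing need be done; otherwise $\mu^{-2}b$ runs over the square class of $b$ as $\mu$ varies, so the new $b$ can be taken in $\{1,\epsilon\}$. The resulting equation is precisely \eqref{eq:reduction}, and it still has non-zero discriminant by the first paragraph.

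The argument is routine and I do not foresee a genuine obstacle; the single point demanding care is the hypothesis $\gcd(p,2g+2)=1$, which is exactly what legitimizes Step~2, while Steps~1 and~3 amount to bookkeeping of square classes over $K$. One should also observe that the steps cannot be reordered arbitrarily: the $x$-translation of Step~2 would undo the normalization of the leading coefficient were Step~1 applied after it, whereas the $x$-scaling of Step~3 is harmless after Step~2 since it cannot create a new term of degree $2g+1$.
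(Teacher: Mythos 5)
Your proof is correct and is essentially the standard elementary normalization that the paper (via \cite{KH18}) has in mind: absorb the square class of the leading coefficient into $c\in\{1,\epsilon\}$, kill the $x^{2g+1}$-term by a translation using the invertibility of $2g+2$, and normalize the $x^{2g}$-coefficient to $\{0,1,\epsilon\}$ by an $x$-scaling compensated by a $y$-scaling. The only nitpick is your closing remark about step order: the translation of Step~2 does not disturb the leading coefficient, so Steps~1 and~2 actually commute; this does not affect the validity of the argument as written.
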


\begin{rem}\label{RemarkQuadraticTwist}
\begin{enumerate}
\item As we pointed out in \cite[Section 3.2]{KH18}, a good method of reduction over an algebraically closed field is to translate three ramified points of the corresponding morphism $C \to \mathbf{P}^1 $ of degree $2$ to $\{0,1,\infty\}$.
However, we can not adopt this method in Lemma \ref{ReductionSplitNode} since the ramified points are not necessarily $K$-rational points.
\item Let $h(x)$ be a monic polynomial over $K$ with non-zero discriminant.
As mentioned in \cite[Remark 3]{KH18}, the hyperelliptic curves $C_1 : y^2 = h (x)$ and $C_2 : \epsilon y^2 = h(x)$ with $\epsilon \in K^\times \smallsetminus (K^\times)^2$ are isomorphic to each other over $K[\sqrt{\epsilon}]$ via $(x,y)\mapsto (x,\sqrt{\epsilon}y)$.
In particular, the superspecialty of $C_1$ is equivalent to that of $C_2$.
\end{enumerate}
\end{rem}

\paragraph{(C) Isomorphism testing:}

We suppose that $p$ and $2 g + 2$ are coprime.
Determining whether two hyperelliptic curves are isomorphic to each other over $K$ is reduced into testing whether a multivariate system has a root over $K$ or not.
Let $C_1$ and $C_2$ be hyperelliptic curves of genus $g$ over $\mathbb{F}_q$.
Recall from Lemma \ref{ReductionSplitNode} that each hyperelliptic curve $C_i$ is the desingularization of the homogenization of $c_i y^2 = f_i (x)$ for $c_i = 1$ or $\epsilon$ with $\epsilon \in \mathbb{F}_q^\times \smallsetminus (\mathbb{F}_q^\times)^2$, where $f_i (x)$ is a polynomial in $\mathbb{F}_q[x]$ of degree $2 g + 2$ with non-zero discriminant.
For each $1 \leq i \leq 2$, let $F_i$ denote the homogenization of $c_i^{-1} f_i$ with respect to an extra variable $z$.
Lemma \ref{lem:isom} shows that $C_1$ and $C_2$ are isomorphic over $K$ if and only if there exist $\lambda \in K^{\times}$ and $h \in \mathrm{GL}_2 (K)$ such that $h \cdot F_1 = \lambda^2 F_2$, where $h \cdot F_1 (x, z) := F_1 ( (x, z) \cdot {}^t h) $.
This is equivalent to that the following multivariate system has a root over $K$:
\begin{eqnarray}
\left\{
\begin{array}{l}
\left( \mbox{All the coefficients in } h \cdot F_1 - \lambda^2 F_2 \right) = 0 \\
\lambda \mu = 1\\
\mathrm{det}(h)  \nu = 1
\end{array}
\right. \label{eq:isom-system}
\end{eqnarray}
where $\lambda$, $\mu$, $\nu$ and all entries of $h$ are indeterminates.
One can decide whether the system \eqref{eq:isom-system} has a root over $K$ or not by computing Gr\"{o}bner bases of the corresponding ideal.
Note that adding field equations such as $\lambda^{q} = \lambda$ is necessary if $K = \mathbb{F}_q$.

\section{Enumeration of superspecial hyperelliptic curves}\label{sec:main}
This section proves Theorems \ref{MainTheorem11} -- \ref{MainTheorem44} stated in Section \ref{sec:intro}.
In Section \ref{subsec:pre-results}, we give a complete proof of computational results in \cite{KH18} for $p \leq 19$ with $g = 4$.
New enumeration results for $(g,q) = (4, 19^2)$ and $(4, 23)$ are stated and proved in Section \ref{subsec:new-results}.
The three ingredients in Section \ref{sec:enume} are applied to computational enumeration for obtaining computational results in Sections \ref{subsec:pre-results} and \ref{subsec:new-results}. 
As a further application, $\mathbb{F}_{p^2}$-maximal curves and $\mathbb{F}_{p^2}$-minimal curves are found in Section \ref{subsec:app} from among enumerated s.sp.\ hyperelliptic curves.

\subsection{Complete proofs of computational results in \cite{KH18}}\label{subsec:pre-results}
After stating results in \cite{KH18} (Propositions \ref{prop:q11and13} -- \ref{prop:q19} below), we prove them by executing enumeration method based on the ingredients in Section \ref{sec:enume}.
Our enumeration method was implemented over Magma V2.22-7~\cite{Magma} in its 64-bit version on a computer with ubuntu 16.04 LTS OS at 3.40 GHz CPU (Intel Core i7-6700) and 15.6 GB memory.
We succeeded in finishing required computation within a day in total.
The source codes and the log files together with detailed information on timing are available at \cite{HPkudo}.

\begin{prop}[\cite{KH18}, Propositions 2 and 3]\label{prop:q11and13}
There does not exist any s.sp.\ hyperelliptic curve of genus $4$ over $\mathbb{F}_q$ defined by an equation of the form \eqref{eq:reduction} for each of $q = 11^2$ and $q=13^2$.
\end{prop}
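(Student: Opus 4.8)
The plan is to carry out ingredients~(A) and~(B) of Section~\ref{sec:enume} explicitly for $(g,q) = (4,11^2)$ and $(4,13^2)$; ingredient~(C) plays no role here, since the assertion is one of non-existence. Recall from the form~\eqref{eq:reduction} of Lemma~\ref{ReductionSplitNode} that a hyperelliptic curve of genus $4$ over $\mathbb{F}_q$ (with $q = p^2$) given by such an equation is the desingularization of the homogenization of $c\,y^2 = f(x)$ for
\[
f(x) = x^{10} + b\,x^{8} + a_{7}x^{7} + \cdots + a_{1}x + a_{0},
\]
with $b \in \{0,1,\epsilon\}$, $c \in \{1,\epsilon\}$ and $\operatorname{disc}(f) \neq 0$; by Remark~\ref{RemarkQuadraticTwist}(2) its superspeciality does not depend on $c$, so it suffices to treat $y^2 = f(x)$ for the three values of $b$. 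Viewing $a_0,\dots,a_7$ as indeterminates over $\mathbb{F}_q$, Corollary~\ref{cor:HW} says that $y^2 = f(x)$ is superspecial exactly when the coefficient of $x^{pi-j}$ in $f^{(p-1)/2}$ vanishes for all $1 \le i,j \le 4$. Since $p \ge 11 > 3$, the $16$ exponents $pi-j$ are pairwise distinct, and all are at most $4p-1 \le (p-1)(g+1) = \deg f^{(p-1)/2}$ (namely $50$ for $p=11$ and $60$ for $p=13$); so the superspeciality of $y^2=f(x)$ amounts to the common vanishing of a family $\Sigma_b$ of $16$ polynomials in the eight unknowns $a_0,\dots,a_7$.

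For each $b \in \{0,1,\epsilon\}$ let $I_b \subseteq \mathbb{F}_q[a_0,\dots,a_7]$ be the ideal generated by $\Sigma_b$ and saturated with respect to $\operatorname{disc}(f)$ — concretely, obtained by eliminating an auxiliary variable $w$ from $(\Sigma_b) + (w\cdot\operatorname{disc}(f)-1)$. Then the zero locus of $I_b$ over $\overline{\mathbb{F}_q}$ is precisely the set of $f$ of the above shape (for that $b$) that are superspecial and have non-zero discriminant, so the assertion to be verified is that this locus is empty for every $b$, equivalently $1 \in I_b$. This is checked by a Gr\"obner basis computation over $\mathbb{F}_q$ in Magma. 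Equivalently, following the algorithm of \cite[Section~3.1]{KH18}, one solves $\Sigma_b$ directly, forms the splitting field of $f$ at each common solution, and observes that $f$ always has a repeated root there; one may also append the field equations $a_i^{q}-a_i$ to restrict to $\mathbb{F}_q$-points, although in fact the locus is already empty over $\overline{\mathbb{F}_p}$, which is the sharper fact underlying Theorem~\ref{MainTheorem11}.

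The one genuine obstacle is the cost of this Gr\"obner basis computation: expanding $f^{(p-1)/2}$ produces dense polynomials of total degree up to $(p-1)/2$ in eight variables over $\mathbb{F}_{p^2}$, and the ensuing saturation and elimination can be expensive. Two structural features keep it tractable. First, the system is weighted-homogeneous once $a_i$ is assigned weight $10-i$: the coefficient of $x^{k}$ in $f^{m}$ is then weighted-homogeneous of weight $10m-k$. Second, the equations indexed by larger $i$ — the coefficients of the top monomials $x^{pi-j}$ — involve only the last few $a_i$'s, so they can be solved first and back-substituted to shrink the remaining system. With these reductions the computation finishes quickly in Magma and returns $1 \in I_b$ in each of the six cases $(p,b) \in \{11,13\} \times \{0,1,\epsilon\}$; the timings, source code, and log files are available at \cite{HPkudo}. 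This proves the proposition.
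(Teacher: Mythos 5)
Your proposal is correct and follows essentially the same route as the paper: reduce to the normal form of Lemma \ref{ReductionSplitNode} (with the $c$-dependence removed via Remark \ref{RemarkQuadraticTwist}), impose the vanishing of the $16$ Cartier--Manin coefficients from Corollary \ref{cor:HW} as a polynomial system in $a_0,\dots,a_7$ for each $b\in\{0,1,\epsilon\}$, and certify emptiness by a Gr\"obner basis computation in Magma. The only differences are organizational: the paper brute-forces a few of the high-degree coefficients over $\mathbb{F}_q$ before solving and checks $\operatorname{disc}(f)\neq 0$ a posteriori by computing splitting fields of the solutions, whereas you saturate by the discriminant up front and test $1\in I_b$ (and invoke weighted homogeneity for efficiency) --- both are valid ways to carry out the same finite verification.
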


\begin{prop}[\cite{KH18}, Proposition 4]\label{prop:q17}
There exist exactly five $($resp.\ two$)$ s.sp.\ hyperelliptic curves of genus $4$ over $\mathbb{F}_{17}$, up to isomorphism over $\mathbb{F}_{17}$ $($resp.\ $\overline{\mathbb{F}_{17}})$.
Specifically, the five $\mathbb{F}_{17}$-isomorphisms classes are represented by $C_i : y^2 = f_i (x)$ for $1 \leq i \leq 5$, where
\begin{enumerate}
\item[{\rm (1)}] $f_1 :=  x^{10} + x$,
\item[{\rm (2)}] $f_2 :=  x^{10} + x^7 + 13 x^4 + 12 x $,
\item[{\rm (3)}] $f_3 :=  x^{10} + x^7 + 14 x^6 + 6 x^5 + 12 x^3 + 5 x^2 + 7 x + 6$,
\item[{\rm (4)}] $f_4 :=  x^{10} + x^8 + x^7 + 15 x^6 + 4 x^5 + 12 x^4 + 15 x^3 + 11 x^2 + 9 x + 4$, and
\item[{\rm (5)}] $f_5 :=  x^{10} + x^8 + 2 x^7 + 9 x^5 + x^4 + 10 x^3 + 8 x^2 + 11 x + 16 y^2 + 5$.
\end{enumerate} 
The two $\overline{\mathbb{F}_{17}}$-isomorphism classes are represented by
\begin{enumerate}
\item[{\rm (1)}] $y^2 =  x^{10} + x$, and
\item[{\rm (2)}] $y^2 =  x^{10} + x^7 + 13 x^4 + 12 x $.
\end{enumerate}
\end{prop}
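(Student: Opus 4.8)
The plan is to apply the three ingredients (A), (B), (C) of Section \ref{sec:enume} with $(g,q)=(4,17)$, and then verify minimality and completeness of the resulting list. First I would invoke Lemma \ref{ReductionSplitNode}: since $p=17$ and $2g+2=10$ are coprime, every hyperelliptic curve of genus $4$ over $\mathbb{F}_{17}$ has a model $c y^2 = f(x)$ with $c\in\{1,\epsilon\}$, $\epsilon$ a fixed non-square in $\mathbb{F}_{17}^\times$, and $f(x)=x^{10}+bx^8+a_7x^7+\cdots+a_1x+a_0$ with $b\in\{0,1,\epsilon\}$ and non-zero discriminant. By Remark \ref{RemarkQuadraticTwist}(2) the superspeciality of $cy^2=f(x)$ does not depend on $c$, so it suffices to run the enumeration for each of the three values of $b$ using ingredient (A): set up, via Corollary \ref{cor:HW} and Proposition \ref{prop:HW}, the multivariate system over $\mathbb{F}_{17}$ demanding that the coefficient of $x^{17i-j}$ in $f^{8}$ vanish for all $1\le i,j\le 4$, treating $a_0,\dots,a_7$ as indeterminates, and compute its solutions by a Gr\"obner basis computation (adding the field equations $a_i^{17}=a_i$). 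For every solution I would then check, by constructing the splitting field of $f$, that $\operatorname{disc}(f)\neq 0$, discarding the rest.

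Next, the list of admissible $f$ so obtained is partitioned into $\mathbb{F}_{17}$-isomorphism classes using ingredient (C): for each pair $(f_i,f_j)$ in the list (including the quadratic twists $\epsilon y^2 = f$ as separate curves, handled via the homogenizations $F_i$ of $c_i^{-1}f_i$ as in Section \ref{sec:enume}) I would decide whether the system \eqref{eq:isom-system} in the unknowns $h\in\operatorname{GL}_2$, $\lambda,\mu,\nu$ has an $\mathbb{F}_{17}$-point, again by a Gr\"obner basis computation with the appropriate field equations, appealing to Lemma \ref{lem:isom}. This yields the equivalence classes; picking one representative from each gives candidate defining polynomials, and running ingredient (B)-type reductions (translating to the normal form \eqref{eq:reduction}) produces the clean representatives $f_1,\dots,f_5$ in the statement. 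To pass from $\mathbb{F}_{17}$-classes to $\overline{\mathbb{F}_{17}}$-classes I would rerun the isomorphism test of Lemma \ref{lem:isom} allowing $h\in\operatorname{GL}_2(\overline{\mathbb{F}_{17}})$ and $\lambda\in\overline{\mathbb{F}_{17}}^\times$ (equivalently, omit the field equations and test solvability over the algebraic closure), which merges the five classes into exactly two, represented by $x^{10}+x$ and $x^{10}+x^7+13x^4+12x$.

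Finally I would confirm directly that the five displayed curves are pairwise non-isomorphic over $\mathbb{F}_{17}$ (no $\mathbb{F}_{17}$-point of \eqref{eq:isom-system} for $i\neq j$) and that each is indeed superspecial (its Cartier--Manin matrix from Proposition \ref{prop:HW} is the zero matrix), so that the enumeration is both exhaustive and irredundant. The main obstacle is entirely computational: the Gr\"obner basis computation for ingredient (A) over $\mathbb{F}_{17}$ with eight indeterminates, governed by the sixteen coefficient conditions coming from $f^{8}$, is the heavy step, and one must be careful to include the degenerate strata (e.g. solutions with $a_0=0$, or where several $a_i$ coincide, which can force a repeated root of $f$) and to discard exactly those with vanishing discriminant. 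The isomorphism tests in (C) are comparatively light since $\operatorname{GL}_2$ has only four entries, but the bookkeeping over both the twist parameter $c$ and the leading-coefficient parameter $b$ must be done carefully so that no class is counted twice and none is missed. All computations were carried out in Magma; the source and logs are at \cite{HPkudo}.
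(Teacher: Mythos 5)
Your proposal is correct and follows essentially the same route as the paper: reduce via Lemma \ref{ReductionSplitNode}, solve the $g^2=16$ vanishing conditions on the Cartier--Manin matrix by Gr\"obner bases, filter by non-zero discriminant, and then sort the survivors (including the twists $\epsilon y^2=f$) into $\mathbb{F}_{17}$- and $\overline{\mathbb{F}_{17}}$-classes with the system \eqref{eq:isom-system}. The only difference is tactical: the paper first specializes a few of the high-degree coefficients over all of $\mathbb{F}_{17}$ (the parameters $s_1,s_2$) and runs many smaller Gr\"obner computations rather than one computation in all eight indeterminates, but this affects feasibility, not correctness.
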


\begin{prop}[\cite{KH18}, Proposition 5]\label{prop:q17-2}
There exist exactly $25$ $($resp.\ two$)$ s.sp.\ hyperelliptic curves of genus $4$ over $\mathbb{F}_{17^2}$, up to isomorphism over $\mathbb{F}_{17^2}$ $($resp.\ $\overline{\mathbb{F}_{17^2}})$.
Specifically, the $25$ $\mathbb{F}_{17^2}$-isomorphisms classes are represented by
\begin{enumerate}
\item[{\rm (1)}] $ y^2=  x^{10} +  x$,
\item[{\rm (2)}] $ y^2=  x^{10} + \zeta x$,
\item[{\rm (3)}] $ y^2=  x^{10} + \zeta^2 x$, 
\item[{\rm (4)}] $ y^2=  x^{10} + \zeta^3 x $,
\item[{\rm (5)}] $ y^2=  x^{10} + \zeta^4 x $,
\item[{\rm (6)}] $ y^2=  x^{10} + \zeta^5 x $,
\item[{\rm (7)}] $ y^2=  x^{10} + \zeta^6 x $,
\item[{\rm (8)}] $ y^2= x^{10} + \zeta^7 x $,
\item[{\rm (9)}] $ y^2= x^{10} + \zeta^8 x $,
\item[{\rm (10)}] $ y^2= x^{10} + x^7 + 13 x^4 + 12 x $,
\item[{\rm (11)}] $ y^2= x^{10} + x^7 + \zeta^{66} x^6 + \zeta^{78} x^5 + \zeta^{138} x^3 + \zeta^{186} x^2 + 7 x + \zeta^{174}$,
\item[{\rm (12)}] $ y^2= x^{10} + \zeta x^7 + \zeta^{74} x^4 + \zeta^{237} x $,
\item[{\rm (13)}] $ y^2= x^{10} + \zeta^2 x^7 + \zeta^{76} x^4 + \zeta^{240} x$, 
\item[{\rm (14)}] $ y^2= x^{10} + x^8 + \zeta^{3} x^7 + \zeta^{98} x^6 + \zeta^{153} x^5 + \zeta^{287} x^4 + \zeta^{71} x^3 + \zeta^{8} x^2 + \zeta^{71} x + \zeta^{254}$,
\item[{\rm (15)}] $ y^2= x^{10} + x^8 + \zeta^3 x^7 + \zeta^{226} x^6 + \zeta^{37} x^4 + \zeta^{147} x^3 + \zeta^{91} x^2 + \zeta^{145} x +  \zeta^{127}$,
\item[{\rm (16)}] $ y^2= x^{10} + x^8 + \zeta^{5} x^7 + \zeta^{138} x^6 + \zeta^{60} x^5 + \zeta^{222} x^4 + \zeta^{128} x^3 + \zeta^{278} x^2 + \zeta^{41} x + \zeta^{24}$,
\item[{\rm (17)}] $ y^2= x^{10} + x^8 + \zeta^{8} x^7 + \zeta^{267} x^6 + \zeta^{50} x^5 + \zeta^{39} x^4 + \zeta^{94} x^3 + \zeta^{58} x^2 + \zeta^{191} x + \zeta^{166}$,
\item[{\rm (18)}] $ y^2= x^{10} + x^8 + \zeta^{10} x^7 + \zeta^{219} x^6 + \zeta^{137} x^5 + \zeta^{103} x^4 + \zeta^{158} x^3 + \zeta^{152} x^2 + \zeta^{2} x  + \zeta^{220}$,
\item[{\rm (19)}] $ y^2= x^{10} + x^8 + \zeta^{13} x^7 + \zeta^{69} x^6 + \zeta^{210} x^5 + \zeta^{22} x^4 + \zeta^{245} x^3 + \zeta^{25} x^2 + \zeta^{35} x + \zeta^{10}$,
\item[{\rm (20)}] $ y^2= x^{10} + x^8 + \zeta^{14} x^7 + \zeta^{104} x^6 + \zeta^{187} x^5 + \zeta^{188} x^4 + 11 x^3 + \zeta^{68} x^2 + \zeta^{148} x  + \zeta^{280}$,
\item[{\rm (21)}] $ y^2= x^{10} + x^8 + \zeta^{16} x^7 + \zeta^{83} x^6 + \zeta^{276} x^5 + \zeta^{164} x^4 + \zeta^{102} x^3 + \zeta^{111} x^2 + \zeta^{2} x  + \zeta^{152}$,
\item[{\rm (22)}] $ y^2= x^{10} + x^8 + \zeta^{16} x^7 + \zeta^{130} x^6 + \zeta^{274} x^5 + \zeta^{133} x^4 + \zeta^{9} x^3 + \zeta^{55} x^2 + \zeta^{175} x  + \zeta^{193}$,
\item[{\rm (23)}] $ y^2= x^{10} + x^8 + \zeta^{19} x^7 + \zeta^{120} x^6 + \zeta^{239} x^5 + \zeta^{123} x^4 + \zeta^{229} x^3 + \zeta^{47} x^2 + \zeta^{145} x + \zeta^{253}$,
\item[{\rm (24)}] $ y^2= x^{10} + x^8 + \zeta^{22} x^7 + \zeta^{250} x^6 + \zeta^{89} x^5 + \zeta^{182} x^4 + \zeta^{9} x^3 + \zeta^{225} x^2 + \zeta^{282} x  + \zeta^{113}$,
\item[{\rm (25)}] $ y^2 = x^{10} + x^8 + \zeta^{41} x^7 + \zeta^{41} x^6 + \zeta^{149} x^5 + \zeta^{169} x^4 + 5 x^3 + \zeta^{197} x^2 + \zeta^{26} x + \zeta^{66}$,
\end{enumerate} 
where we take $\zeta =  - 8 + \sqrt{61} \in \mathbb{F}_{17^2}$, and the two $\overline{\mathbb{F}_{17^2}}$-isomorphism classes are represented by the same equations as those in {\rm Proposition \ref{prop:q17}}.
\end{prop}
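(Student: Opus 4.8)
The plan is to run the three ingredients (A), (B), (C) of Section~\ref{sec:enume} with $(g,q)=(4,17^2)$; since $p=17$ is coprime to $2g+2=10$ and $q=17^2>2g+1=9$, all of their hypotheses are met. By Lemma~\ref{ReductionSplitNode}, every genus-$4$ hyperelliptic curve over $\mathbb{F}_{17^2}$ is the desingularization of the homogenization of
\[
c y^{2}=f(x),\qquad f(x)=x^{10}+b x^{8}+a_{7}x^{7}+\cdots+a_{1}x+a_{0},
\]
with $b\in\{0,1,\epsilon\}$ and $c\in\{1,\epsilon\}$, where $\epsilon\in\mathbb{F}_{17^2}^{\times}\smallsetminus(\mathbb{F}_{17^2}^{\times})^{2}$ (one may take $\epsilon=\zeta$ with $\zeta=-8+\sqrt{61}$, the generator used below to present the representatives). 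By Remark~\ref{RemarkQuadraticTwist}(2) the superspeciality of $c y^{2}=f$ is independent of $c$, so it suffices to impose superspeciality on $y^{2}=f$, the twist $c$ being retained only for the eventual $\mathbb{F}_{17^2}$-isomorphism count.

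Ingredient (A) then applies: by Proposition~\ref{prop:HW} and Corollary~\ref{cor:HW}, $y^{2}=f$ is superspecial if and only if the coefficient of $x^{17i-j}$ in $f^{(p-1)/2}=f^{8}$ vanishes for all $1\le i,j\le 4$. Expanding $f^{8}$ (a polynomial of degree $80$ in $x$ whose coefficients have total degree at most $8$ in $a_{0},\dots,a_{7}$), these are the coefficients of $x^{13},\dots,x^{16}$, $x^{30},\dots,x^{33}$, $x^{47},\dots,x^{50}$ and $x^{64},\dots,x^{67}$, i.e.\ a system of $16$ equations in the $8$ unknowns $a_{i}$. For each of the three values of $b$ we compute on Magma a Gr\"obner basis of the ideal generated by these $16$ polynomials together with the field equations $a_{i}^{q}-a_{i}$, enumerate its finitely many solutions, and for each resulting $f$ discard those of zero discriminant by constructing the minimal splitting field of $f$ over $\mathbb{F}_{17^2}$. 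This yields an explicit finite list $\mathcal{L}$ of pairs $(c,f)$ that exhausts all superspecial genus-$4$ hyperelliptic curves over $\mathbb{F}_{17^2}$.

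Finally, ingredient (C): for every ordered pair of curves from $\mathcal{L}$ we form the system~\eqref{eq:isom-system} in the four entries of $h\in\GL_{2}$ and the auxiliary scalars $\lambda,\mu,\nu$, adjoin the field equations (since $K=\mathbb{F}_{17^2}$), and decide by a Gr\"obner basis computation whether the system has a root, i.e.\ whether the two curves are $\mathbb{F}_{17^2}$-isomorphic (Lemma~\ref{lem:isom}). Partitioning $\mathcal{L}$ by this relation gives exactly $25$ classes; picking one member of each produces the list (1)--(25) in the statement, and the same computation certifies that these $25$ representatives are pairwise non-isomorphic over $\mathbb{F}_{17^2}$. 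Re-running the isomorphism test without field equations (so that $h$ ranges over $\GL_{2}(\overline{\mathbb{F}_{17^2}})$) merges the $25$ classes into $2$, represented by $y^{2}=x^{10}+x$ and $y^{2}=x^{10}+x^{7}+13x^{4}+12x$, exactly as in Proposition~\ref{prop:q17}.

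The main obstacle is the size and reliability of the Gr\"obner basis computations rather than any conceptual difficulty: the superspeciality system consists of degree-$8$ polynomials in $8$ variables over the quadratic field $\mathbb{F}_{17^2}$, and before discriminant filtering and isomorphism reduction its solution set together with the twist choices can be large, while the isomorphism phase requires on the order of $|\mathcal{L}|^{2}$ separate eliminations over $\mathbb{F}_{17^2}$. In practice this is handled by first solving the superspeciality system over $\mathbb{F}_{17}$ or $\overline{\mathbb{F}_{17}}$ to prune the search, by exploiting the near-triangular dependence of the equations on the top coefficients $a_{7},a_{6},\dots$, and by reducing candidates under the available $\GL_{2}$-action before pairwise testing; the concrete timings and the source and log files are provided at~\cite{HPkudo}.
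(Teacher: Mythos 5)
Your proposal is correct and follows essentially the same route as the paper: reduce via Lemma \ref{ReductionSplitNode} and Remark \ref{RemarkQuadraticTwist}(2), impose the vanishing of the $16$ Cartier--Manin coefficients of $f^{8}$ as a polynomial system solved by Gr\"obner bases with field equations, filter by discriminant via the splitting field, and then classify the survivors with the isomorphism test of Lemma \ref{lem:isom} over $\mathbb{F}_{17^2}$ and over the algebraic closure. The only difference is organizational: the paper makes the computation tractable by brute-forcing the top coefficients ($c_8\in\{0,1,\epsilon\}$ and $(a_6,a_7)\in\mathbb{F}_{17^2}^{2}$) before solving for the remaining $a_0,\dots,a_5$, which is one of the practical devices you already anticipate.
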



\begin{prop}[\cite{KH18}, Proposition 6]\label{prop:q19}
There exist exactly $12$ $($resp.\ two$)$ s.sp.\ hyperelliptic curves of genus $4$ over $\mathbb{F}_{19}$, up to isomorphism over $\mathbb{F}_{19}$ $($resp.\ $\overline{\mathbb{F}_{19}})$.
Specifically, the $12$ $\mathbb{F}_{19}$-isomorphisms classes are represented by
\begin{enumerate}
\item[{\rm (1)}] $y^2 = x^{10} + 1$,
\item[{\rm (2)}] $y^2 = x^{10} + 2$,
\item[{\rm (3)}] $y^2 = x^{10} + x^7 + 4 x^6 + 15 x^5 + 6 x^4 + 8 x^3 + 5 x^2 + 12 x + 1$,
\item[{\rm (4)}] $y^2 = x^{10} + x^8 + 7 x^6 + x^4 + x^2 + 7$,
\item[{\rm (5)}] $y^2 = x^{10} + x^8 + x^7 + 12 x^6 + x^5 + 10 x^4 + 9 x^3 + 8 x^2 + 9 x + 3$,
\item[{\rm (6)}] $y^2= x^{10} + x^8 + x^7 + 13 x^6 + 9 x^5 + 14 x^4 + 4 x^3 + 11 x^2 + 3 x + 8$,
\item[{\rm (7)}] $y^2= x^{10} + x^8 + 2 x^7 + 6 x^6 + 18 x^5 + 4 x^4 + 13 x^3 + 18 x^2 + 10 x + 14$,
\item[{\rm (8)}] $y^2 = x^{10} + x^8 + 2 x^7 + 12 x^6 + 18 x^4 + 5 x^3 + x^2 + 7$,
\item[{\rm (9)}] $y^2= x^{10} + x^8 + 4 x^7 + 8 x^6 + 8 x^5 + 3 x^4 + 11 x^3 + 8 x^2 + 8 x  + 4$,
\item[{\rm (10)}] $y^2 = x^{10} + 2 x^8 + 9 x^6 + 8 x^4 + 16 x^2 + 15$,
\item[{\rm (11)}] $y^2 = x^{10} + 2 x^8 + x^7 + 12 x^6 + 9 x^5 + 2 x^3 + 4 x^2 + 7 x + 4$, and
\item[{\rm (12)}] $y^2 = x^{10} + 2 x^8 + 3 x^7 + 17 x^6 + 9 x^5 + 2 x^3 + 12 x^2 + 2 x + 4$.
\end{enumerate} 
The two $\overline{\mathbb{F}_{19}}$-isomorphism classes are represented by
\begin{enumerate}
\item[{\rm (1)}] $y^2 = x^{10} + 1$, and
\item[{\rm (2)}] $y^2 = x^{10} + x^7 + 4 x^6 + 15 x^5 + 6 x^4 + 8 x^3 + 5 x^2 + 12 x + 1$.
\end{enumerate} 
\end{prop}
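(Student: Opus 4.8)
The plan is to carry out the three-ingredient enumeration method recalled in Section~\ref{sec:enume} for $(g,q)=(4,19)$. Here $2g+2=10$ is prime to $p=19$ and $19>2g+1=9$, so the realization of hyperelliptic curves in Subsection~\ref{subsec:hype}, the normal form of Lemma~\ref{ReductionSplitNode}, and the superspeciality criterion of Corollary~\ref{cor:HW} (with $(p-1)/2=9$) all apply. Fix $\epsilon\in\mathbb{F}_{19}^{\times}\smallsetminus(\mathbb{F}_{19}^{\times})^{2}$.

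First I would set up the defining system. By Lemma~\ref{ReductionSplitNode}, every genus-$4$ hyperelliptic curve over $\mathbb{F}_{19}$ is the desingularization of the homogenization of $cy^{2}=f(x)$ with
\[
f(x)=x^{10}+bx^{8}+a_{7}x^{7}+a_{6}x^{6}+\cdots+a_{1}x+a_{0},\qquad b\in\{0,1,\epsilon\},\quad c\in\{1,\epsilon\},
\]
and $\operatorname{disc}(f)\neq 0$. By Remark~\ref{RemarkQuadraticTwist}(2) the superspeciality of $cy^{2}=f$ is independent of $c$, so for the listing step I take $c=1$. By Corollary~\ref{cor:HW}, $y^{2}=f$ is superspecial if and only if the coefficient of $x^{19i-j}$ in $f^{9}$ vanishes for all $1\le i,j\le 4$; treating $a_{0},\dots,a_{7}$ as indeterminates and $b$ as a parameter ranging over $\{0,1,\epsilon\}$, this is an explicit system of $16$ polynomial equations (of total degree at most $9$) in $8$ variables over $\mathbb{F}_{19}$.

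Next I would solve the system and prune it. For each of the three values of $b$, I adjoin the field equations $a_{i}^{19}-a_{i}=0$ and compute a Gr\"obner basis of the resulting ideal in Magma to extract all $\mathbb{F}_{19}$-rational solutions; for each solution I check whether $f$ has a multiple root (e.g.\ by factoring $f$ over its splitting field, as in ingredient (A)) and discard those with $\operatorname{disc}(f)=0$. The surviving $f$, together with their quadratic twists $\epsilon y^{2}=f$ (again superspecial by Remark~\ref{RemarkQuadraticTwist}(2)), form a finite and complete---but redundant---list of superspecial genus-$4$ hyperelliptic curves over $\mathbb{F}_{19}$. Finally I collapse this list into isomorphism classes via ingredient (C): for each ordered pair of curves I form the system~\eqref{eq:isom-system} in the entries of $h\in\operatorname{GL}_{2}$ and in $\lambda,\mu,\nu$, and test its solvability with a Gr\"obner basis, including the field equations to decide $\mathbb{F}_{19}$-isomorphism (Lemma~\ref{lem:isom}) and omitting them to decide $\overline{\mathbb{F}_{19}}$-isomorphism. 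Grouping the list by these relations should produce exactly $12$ classes over $\mathbb{F}_{19}$ and $2$ over $\overline{\mathbb{F}_{19}}$; picking one representative per class and rescaling it to the stated monic shape yields the twelve polynomials $f_{i}$ and the two $\overline{\mathbb{F}_{19}}$-representatives listed in the statement.

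The main obstacle is the practicality of the Gr\"obner basis computations rather than any conceptual difficulty: the superspeciality system has equations of weighted degree $9$ arising from $f^{9}$ (of degree $90$) in eight variables, and the number of pairwise isomorphism tests over $\operatorname{GL}_{2}$ grows with the size of the redundant list. Useful reductions are to split the superspeciality computation into cases according to which low-order $a_{i}$ vanish, to exploit that the coefficients of the highest powers of $x$ in $f^{9}$ give sparse equations, and to run the $\overline{\mathbb{F}_{19}}$-isomorphism tests first so that each $\mathbb{F}_{19}$-class need only be separated within a single geometric class.
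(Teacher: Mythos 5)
Your proposal is correct and follows essentially the same route as the paper: the normal form of Lemma~\ref{ReductionSplitNode} with $c$ handled via Remark~\ref{RemarkQuadraticTwist}, the Cartier--Manin vanishing conditions of Corollary~\ref{cor:HW} turned into a polynomial system in the $a_i$ solved by Gr\"obner bases with field equations, a discriminant check via the splitting field, and pairwise isomorphism testing through the system~\eqref{eq:isom-system}. The only difference is cosmetic: the paper makes the computation tractable by brute-forcing the top few coefficients ($a_6,a_7$ together with $b$, via its parameters $s_1,s_2$) before calling the Gr\"obner solver, which is the same kind of case-splitting you propose at the end.
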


\paragraph{{\it Proofs of {\rm Propositions \ref{prop:q11and13} -- \ref{prop:q19}.}}}
Let $g = 4$.
In the procedures 1--4 below, we set the following parameters:
\begin{itemize}
\item ($q = 11^2$) $s_1 := 2 g+1 = 9$ and $s_2:= 2 g -2 = 8$,
\item ($q = 13^2$) $s_1:= 2 g + 1 = 9$ and $s_2:=2 g - 1 = 7$,
\item ($q = 17$ and $19$) $s_1:= 2 g + 1 = 9$ and $s_2 := 2 g - 2 = 6$,
\item ($q=17^2$ and $19^2$) $s_1 := 2 g  = 8$ and $s_2:= 2 g - 2 = 6$,
\item ($q = 23$) $s_1:= 2 g  = 8$ and $s_2 := 2 g - 3 = 5$.
\end{itemize}
By the computer described at the beginning of this section, we conduct the following four procedures:
\begin{enumerate}
\item[{\rm 1}.] Let $a_i$ for $0 \leq i \leq s_{1}-1$ be indeterminates.
\end{enumerate}
For each $(c_{s_1}, \ldots c_{2g-1}) \in (\mathbb{F}_q)^{\oplus 2g-s_1}$ and $c_{2g} \in \{ 0, 1 , \epsilon \}$ with $\epsilon \in \mathbb{F}_q^{\times} \smallsetminus (\mathbb{F}_q^{\times})^2$, proceed with the following procedures:
\begin{enumerate}
\item[{\rm 2}.] Put $f(x):= x^{2 g + 2} + c_{2g} x^{ 2 g} + c_{2 g- 1} x^{2 g - 1} + \cdots +  c_{s_1} x^{s_1} + a_{s_1-1} x^{s_1 - 1} + \cdots + a_1 x + a_0$, and compute $h:= f^{(p-1)/2}$ over $\mathbb{F}_{q} [a_{0}, \ldots, a_{s_1-1}] [x]$.
\item[{\rm 3}.] Let $\mathcal{S} \subset \mathbb{F}_{q} [a_0, \ldots , a_{s_1 -1}] $ be the set of the coefficients of the $g^2$ monomials in $h=f^{(p-1)/2}$, given in Proposition \ref{prop:HW}.
\item[{\rm 4}.] For each $(c_{s_2}, \ldots c_{s_1-1}) \in (\mathbb{F}_q)^{\oplus 2g-s_1-s_2}$, proceed with the following three steps 4a -- 4c:
	\begin{enumerate}
		\item[{\rm 4a}.] Substitute $(c_{s_2}, \ldots c_{s_1-1})$ into $(a_{s_2}, \ldots a_{s_1-1})$ of the coefficients in each $P \in \mathcal{S}$, and put
		\[
		\mathcal{S}^{\prime}:= \{ P (a_0, \ldots , a_{s_2-1}, c_{s_2}, \ldots , c_{s_1-1}) : P \in \mathcal{S} \} \cup \{ a_{i} - c_{i} : s_2 \leq i \leq s_1-1 \}.
		\]
		\item[{\rm 4b}.] With Gr\"{o}bner basis algorithms, compute the roots in $(\mathbb{F}_{q})^{\oplus s_1}$ of the multivariate system $P^{\prime} = 0$ for all $P^{\prime} \in \mathcal{S}^{\prime}$ with variables $a_0, \ldots , a_{s_1 -1}$.
		\item[{\rm 4c}.] For each root $(c_0, \ldots , c_{s_1-1})$ of the system constructed in Step 4b, we set $f_{\rm sol}:= x^{2g+2} + c_{2g} x^{2g} + c_{2g-1} x^{2g-1} + \cdots + c_{s_1} x^{s_1} + c_{s_1-1} x^{s_1-1} + \cdots +  c_1 x + c_0$.
		By constructing the minimal splitting field of $f_{\rm sol}$, test whether $f_{\rm sol}$ has no double root in $\overline{\mathbb{F}_{q}}$ or not. 
		If $f_{\rm sol}$ has no double root in $\overline{\mathbb{F}_{q}}$, store $f_{\rm sol}$.
	\end{enumerate}
\end{enumerate}
As a computational result for each $q$, we obtain the set $\mathcal{F}$ of all the polynomials $f(x)$ of the form in the right hand side of \eqref{eq:reduction} such that $y^2 = f(x)$ are s.sp.\ hyperelliptic curves of genus $g$ over $\mathbb{F}_{q}$.
Put $\mathcal{H}_0 := \{ c y^2 - f(x) : c=1, \epsilon \mbox{ and } f(x) \in \mathcal{F} \}$.
For each pair $(H_1, H_2)$ of elements in $\mathcal{H}_0$ with $H_1 \neq H_2$, the method given in the third paragraph of Section \ref{sec:enume} decides whether $C_1 : H_1 (x,y) = 0$ and $C_2: H_2 (x,y) = 0$ are isomorphic or not.
Finally we obtain the set $\mathcal{H}_{K} \subset \mathcal{H}_0$ of representatives of $K$-isomorphism classes of s.sp.\ hyperelliptic curves of genus $g$ over $\mathbb{F}_q$, where $K$ is either of $\mathbb{F}_q$ and $\overline{\mathbb{F}_q}$.
Propositions \ref{prop:q11and13} -- \ref{prop:q19} follow from the resulting sets $\mathcal{H}_{K}$ with $K = \mathbb{F}_q$ or $\overline{\mathbb{F}_q}$ for $q = 11^2$, $13^2$, $17$, $17^2$ and $19$.\qed

\begin{rem}
In our implementation, the Magma built-in function \textsf{Variety} (resp.\ \textsf{FactorisationOverSplittingField}) was used to solve multivariate systems over finite fields (resp.\ to decide whether a univariate polynomial over a finite field has no double root or not).
\end{rem}

\subsection{New results in characteristic $19$ and $23$}\label{subsec:new-results}

\begin{prop}\label{prop:q19-2}
There exist exactly $18$ $($resp.\ two$)$ s.sp.\ hyperelliptic curves of genus $4$ over $\mathbb{F}_{19^2}$, up to isomorphism over $\mathbb{F}_{19^2}$ $($resp.\ $\overline{\mathbb{F}_{19^2}})$.
Specifically, the $18$ $\mathbb{F}_{19^2}$-isomorphisms classes are represented by
\begin{enumerate}
\item[{\rm (1)}] $y^2= x^{10} + 1$,
\item[{\rm (2)}] $y^2= x^{10} + \zeta$,
\item[{\rm (3)}] $y^2 = x^{10} + \zeta^2$,
\item[{\rm (4)}] $y^2 = x^{10} + \zeta^3$,
\item[{\rm (5)}] $y^2=x^{10} + \zeta^4$,
\item[{\rm (6)}] $y^2 = x^{10} + \zeta^5$,
\item[{\rm (7)}] $y^2 = x^{10} + \zeta^7$,
\item[{\rm (8)}] $y^2 = x^{10} + \zeta^9$,
\item[{\rm (9)}] $y^2 = x^{10} + x^7 + 4 x^6 + 15 x^5 + 6 x^4 + 8 x^3 + 5 x^2 + 12 x + 1$
\item[{\rm (10)}] $y^2 = x^{10} + \zeta^2 x^7 + \zeta^{31} x^6 + \zeta^{169} x^5 + \zeta^{322} x^4 + \zeta^{257} x^3 + \zeta^{352} x^2 + \zeta^{227} x + \zeta^{13}$,
\item[{\rm (11)}] $y^2 = x^{10} + \zeta^2 x^7 + \zeta^{61} x^6 + \zeta^{31} x^5 + \zeta^{286} x^4 + \zeta^{359} x^3 + \zeta^{232} x^2 + \zeta^{245} x + \zeta^7$,
\item[{\rm (12)}] $y^2= x^{10} + x^8 + 2 x^6 + \zeta^{110} x^5 + 7 x^4 + \zeta^{330} x^3 + 9 x^2 + \zeta^{30} x + 17$,
\item[{\rm (13)}] $y^2 = x^{10} + x^8 + x^7 + 13 x^6 + 9 x^5 + 14 x^4 + 4 x^3 + 11 x^2 + 3 x + 8$,
\item[{\rm (14)}] $y^2 = x^{10} + x^8 + \zeta^2 x^7 + \zeta x^6 + \zeta^{54} x^5 + \zeta^{151} x^4 + \zeta^{76} x^3 + \zeta^{205} x^2 + 15 x + \zeta^{335}$,
\item[{\rm (15)}] $y^2 = x^{10} + x^8 + \zeta^2 x^7 + \zeta^{57} x^6 + \zeta^{179} x^5 + x^4 + \zeta^{298} x^3 + \zeta^{89} x^2 + \zeta^{204} x + \zeta^{171}$,
\item[{\rm (16)}] $y^2 = x^{10} + x^8 + \zeta^{12} x^7 + \zeta^{196} x^6 + \zeta^{193} x^5 + \zeta^{281} x^4 + \zeta^{293} x^3 + \zeta^{107} x^2 + \zeta^{316} x + \zeta^{74}$,
\item[{\rm (17)}] $y^2 = x^{10} + x^8 + 2 x^7 + 12 x^6 + 18 x^4 + 5 x^3 + x^2 + 7$,
\item[{\rm (18)}] $y^2 = x^{10} + \zeta x^8 + \zeta^{122} x^6 + \zeta^3 x^4 + \zeta^4 x^2 + \zeta^{125}$,
\end{enumerate}
where we take $\zeta =  - 9 - \sqrt{79} \in \mathbb{F}_{19^2}$, and the two $\overline{\mathbb{F}_{19^2}}$-isomorphism classes are represented by the same equations as those in {\rm Proposition \ref{prop:q19}}.
\end{prop}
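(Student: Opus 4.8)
The plan is to run, for $g=4$ and $q=19^2$, exactly the three-ingredient enumeration of Section~\ref{sec:enume} that established Propositions~\ref{prop:q11and13}--\ref{prop:q19}, using the parameters $s_1:=2g=8$ and $s_2:=2g-2=6$ already recorded in the list preceding those proofs. First, Lemma~\ref{ReductionSplitNode} reduces us to curves $c\,y^2=f(x)$ with $c\in\{1,\epsilon\}$ for a fixed nonsquare $\epsilon\in\mathbb{F}_{19^2}^{\times}$ and $f$ of the shape \eqref{eq:reduction}, and by Remark~\ref{RemarkQuadraticTwist}(2) superspeciality of $c\,y^2=f(x)$ does not depend on $c$; so it suffices to solve the superspeciality system for monic $f$ only. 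Concretely I would carry out procedures~1--4 of the preceding proof: with $a_0,\dots,a_7$ indeterminate and $c_8$ ranging over $\{0,1,\epsilon\}$, set $f=x^{10}+c_8x^8+a_7x^7+\dots+a_0$, compute $h:=f^{(p-1)/2}=f^{9}$ over $\mathbb{F}_{19^2}[a_0,\dots,a_7][x]$ once per value of $c_8$, and let $\mathcal S$ be the set of the $g^2=16$ coefficients of the monomials $x^{19i-j}$, $1\le i,j\le4$, from Proposition~\ref{prop:HW}. By Corollary~\ref{cor:HW}, $y^2=f(x)$ is superspecial exactly when every element of $\mathcal S$ vanishes.

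Next, to make the Gr\"{o}bner basis computation tractable I would use the hybrid specialisation of procedure~4: loop over the $19^{4}$ values $(c_6,c_7)\in(\mathbb{F}_{19^2})^{\oplus 2}$, substitute them for $(a_6,a_7)$, adjoin the linear equations $a_6-c_6$ and $a_7-c_7$, and use Magma's \textsf{Variety} to find all roots in $(\mathbb{F}_{19^2})^{\oplus 8}$; for each root, reconstruct $f_{\mathrm{sol}}$ and keep it only if \textsf{FactorisationOverSplittingField} shows it has no repeated root in $\overline{\mathbb{F}_{19^2}}$. This yields the finite set $\mathcal F$ of all monic $f$ of the form \eqref{eq:reduction} with $y^2=f(x)$ superspecial. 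Then form $\mathcal H_0=\{\,c\,y^2-f:\ c\in\{1,\epsilon\},\ f\in\mathcal F\,\}$ and, for each unordered pair $\{H_1,H_2\}\subset\mathcal H_0$, decide $\mathbb{F}_{19^2}$- and $\overline{\mathbb{F}_{19^2}}$-isomorphy by the test of the third paragraph of Section~\ref{sec:enume}: homogenise $c_i^{-1}f_i$ to $F_i$ and check by a Gr\"{o}bner basis whether \eqref{eq:isom-system} has a root, adjoining the field equations $\lambda^{q}=\lambda$ etc.\ for the $\mathbb{F}_{19^2}$-case and omitting them for the geometric one. Reading off equivalence-class representatives gives $\mathcal H_{\mathbb{F}_{19^2}}$ and $\mathcal H_{\overline{\mathbb{F}_{19^2}}}$, and it then remains only to verify, by inspecting the output, that with $\zeta=-9-\sqrt{79}$ these coincide with the $18$ (resp.\ two) curves listed; in particular the two geometric classes should come out equal to the two curves of Proposition~\ref{prop:q19}, which serves as a useful internal consistency check. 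Source code and logs are available at~\cite{HPkudo}.

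The difficulty here is computational rather than conceptual. Over $\mathbb{F}_{19^2}$ the inner loop runs over $3\cdot 19^{4}\approx 3.9\times 10^{5}$ specialised systems, each one a \textsf{Variety} call on $16$ polynomials of total degree up to $9$ in $8$ variables (effectively solving for $a_0,\dots,a_5$ after the specialisation), followed by a splitting-field factorisation for every solution; and the isomorphism phase requires a Gr\"{o}bner basis computation for every pair among the surviving curves. Staying within the time budget of the machine of Section~\ref{subsec:pre-results} relies on the organisation of procedures~1--4: the symbolic ninth power $f^{9}$ is formed only three times (once per $c_8$), only numerical substitutions and Gr\"{o}bner solving happen in the innermost loop, and Remark~\ref{RemarkQuadraticTwist}(2) is used to avoid re-solving the superspeciality system for the twist $c=\epsilon$. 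Should the inner solves prove too slow, the natural fallback would be to specialise one further coefficient (take $s_2=5$, as was done for $q=23$) at the cost of enlarging the outer loop.
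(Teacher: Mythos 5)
Your proposal follows exactly the paper's own route: the three-ingredient enumeration of Section~\ref{sec:enume} with the parameters $s_1=2g=8$, $s_2=2g-2=6$ already fixed for $q=19^2$ in the proof of Propositions~\ref{prop:q11and13}--\ref{prop:q19}, including the loop over $c_8\in\{0,1,\epsilon\}$, the specialisation of $(a_6,a_7)$, the \textsf{Variety}/splitting-field filtering, and the pairwise Gr\"obner-basis isomorphism test with and without field equations. This is precisely how the paper (implicitly, by reference to the same procedure) establishes Proposition~\ref{prop:q19-2}, so the proposal is correct and essentially identical in approach.
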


\begin{prop}\label{prop:q23}
There exist exactly $14$ $($resp.\ four$)$ s.sp.\ hyperelliptic curves of genus $4$ over $\mathbb{F}_{23}$, up to isomorphism over $\mathbb{F}_{23}$ $($resp.\ $\overline{\mathbb{F}_{23}})$.
Specifically, the $14$ $\mathbb{F}_{23}$-isomorphisms classes are represented by
\begin{enumerate}
\item[{\rm (1)}] $y^2 = x^{10} + x^7 + 3 x^4 + 10 x$,
\item[{\rm (2)}] $y^2 = x^{10} + x^7 + 18 x^4 + 6 x$,
\item[{\rm (3)}] $y^2 = x^{10} + x^7 + 5 x^6 + 3 x^5 + 21 x^4 + 3 x^3 + 9 x^2 + 4 x + 21$,
\item[{\rm (4)}] $y^2 = x^{10} + x^7 + 9 x^6 + 11 x^5 + 19 x^4 + 10 x^3 + 16 x^2 + 8 x + 21$,
\item[{\rm (5)}] $y^2= x^{10} + x^7 + 16 x^6 + 9 x^5 + 14 x^4 + 2 x^3 + 5 x^2 + 6 x + 1$,
\item[{\rm (6)}] $y^2 = x^{10} + x^7 + 17 x^6 + 13 x^5 + 3 x^3 + 14 x^2 + 20 x + 15$,
\item[{\rm (7)}] $y^2= x^{10} + x^7 + 18 x^6 + 21 x^4 + x^3 + 8 x^2 + 20 x + 21$,
\item[{\rm (8)}] $y^2 = x^{10} + x^8 + 3 x^6 + 2 x^4 + 2 x^2 + 6$,
\item[{\rm (9)}] $y^2= x^{10} + x^8 + 6 x^6 + 22 x^4 + 4 x^2 + 3$,
\item[{\rm (10)}] $y^2 = x^{10} + x^8 + 8 x^6 + 7 x^4 + 15 x^2 + 14$,
\item[{\rm (11)}] $y^2= x^{10} + x^8 + 2 x^7 + 6 x^6 + 3 x^5 + 14 x^4 + 16 x^3 + 11 x^2 + 19$,
\item[{\rm (12)}] $y^2 = x^{10} + x^8 + 3 x^7 + x^6 + 13 x^4 + 22 x^3 + 12 x^2 + 4$,
\item[{\rm (13)}] $y^2= x^{10} + x^8 + 4 x^7 + 12 x^6 + 2 x^5 + 12 x^2 + 11 x + 18$,
\item[{\rm (14)}] $y^2 = x^{10} + x^8 + 5 x^7 + 15 x^6 + 22 x^5 + 11 x^4 + 7 x^2 + 18 x + 17$.
\end{enumerate} 
The four $\overline{\mathbb{F}_{23}}$-isomorphism classes are represented by
\begin{enumerate}
\item[{\rm (1)}] $y^2 = x^{10} + x^7 + 3 x^4 + 10 x$,
\item[{\rm (2)}] $y^2 = x^{10} + x^7 + 18 x^4 + 6 x$,
\item[{\rm (3)}] $y^2 = x^{10} + x^7 + 5 x^6 + 3 x^5 + 21 x^4 + 3 x^3 + 9 x^2 + 4 x + 21$, and
\item[{\rm (4)}] $y^2 = x^{10} + x^7 + 9 x^6 + 11 x^5 + 19 x^4 + 10 x^3 + 16 x^2 + 8 x + 21$.
\end{enumerate} 
\end{prop}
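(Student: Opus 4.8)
The plan is to run the enumeration method of Section~\ref{sec:enume} with $g=4$ and $q=23$, using the parameters $s_1=2g=8$ and $s_2=2g-3=5$ from the list preceding the proof of Propositions~\ref{prop:q11and13}--\ref{prop:q19}, and then to apply the isomorphism test of ingredient~(C) twice, once over $\mathbb{F}_{23}$ and once over $\overline{\mathbb{F}_{23}}$.

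\emph{Step 1 (listing all superspecial $f$).} By Lemma~\ref{ReductionSplitNode}, every hyperelliptic curve of genus $4$ over $\mathbb{F}_{23}$ is the desingularization of the homogenization of $c\,y^2=f(x)$ with $c\in\{1,\epsilon\}$ and
\[
f(x)=x^{10}+c_{8}x^{8}+a_{7}x^{7}+a_{6}x^{6}+a_{5}x^{5}+a_{4}x^{4}+a_{3}x^{3}+a_{2}x^{2}+a_{1}x+a_{0},\qquad c_{8}\in\{0,1,\epsilon\}.
\]
Since $(p-1)/2=11$, Corollary~\ref{cor:HW} tells us that $y^2=f(x)$ is superspecial if and only if the coefficient of $x^{11i-j}$ in $f^{11}$ vanishes for all $1\le i,j\le 4$. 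For each of the three values of $c_{8}$ I would compute $f^{11}$ symbolically over $\mathbb{F}_{23}[a_0,\dots,a_7][x]$ and read off the set $\mathcal S\subset\mathbb{F}_{23}[a_0,\dots,a_7]$ of these $g^2=16$ coefficients. Then, looping over the $23^{3}$ tuples $(c_5,c_6,c_7)\in\mathbb{F}_{23}^{3}$, substitute $a_i\mapsto c_i$ for $i=5,6,7$ into $\mathcal S$, form $\mathcal S'$ as in Step~4a, and compute the roots in $\mathbb{F}_{23}^{\,8}$ of the resulting system with a Gröbner basis computation (Magma's \textsf{Variety}, with the field equations $a_i^{23}=a_i$). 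For each root $(c_0,\dots,c_7)$ set $f_{\mathrm{sol}}=x^{10}+c_8x^8+\cdots+c_1x+c_0$ and keep it precisely when $f_{\mathrm{sol}}$ is squarefree in $\overline{\mathbb{F}_{23}}[x]$, which is tested by factoring over the minimal splitting field of $f_{\mathrm{sol}}$. The union over all loops is the finite set $\mathcal F$ of all $f$ of the form~\eqref{eq:reduction} with $y^2=f(x)$ superspecial; by Remark~\ref{RemarkQuadraticTwist}(2) the twist $\epsilon y^2=f(x)$ is superspecial as well.

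\emph{Step 2 (isomorphism classes).} Put $\mathcal H_0=\{\,c\,y^2-f(x): c\in\{1,\epsilon\},\ f\in\mathcal F\,\}$. For each unordered pair $\{H_1,H_2\}\subset\mathcal H_0$, homogenize $c_i^{-1}f_i$ to $F_i$ and, by Lemma~\ref{lem:isom}, decide whether $C_1:H_1=0$ and $C_2:H_2=0$ are $\mathbb{F}_{23}$-isomorphic by testing, via a Gröbner basis of the ideal generated by~\eqref{eq:isom-system} together with the field equations $\lambda^{23}=\lambda$, $\mu^{23}=\mu$, $\nu^{23}=\nu$ and those on the entries of $h$, whether that system has a root over $\mathbb{F}_{23}$; dropping the field equations gives the test over $\overline{\mathbb{F}_{23}}$. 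Selecting one representative per equivalence class yields the $14$ curves over $\mathbb{F}_{23}$ and the $4$ curves over $\overline{\mathbb{F}_{23}}$ displayed in the statement. (For the $\overline{\mathbb{F}_{23}}$ count one may instead discard the twist $c=\epsilon$ from the start, since $y^2=f$ and $\epsilon y^2=f$ become isomorphic over $\overline{\mathbb{F}_{23}}$ by Remark~\ref{RemarkQuadraticTwist}(2).)

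\emph{Main obstacle.} The argument is a finite computation, so the only genuine difficulty is feasibility: Step~1 involves on the order of $3\cdot 23^{3}$ variety computations, each in at most five unknowns over $\mathbb{F}_{23}$, while Step~2 involves $O(|\mathcal H_0|^{2})$ Gröbner basis computations for the isomorphism system~\eqref{eq:isom-system} in the nine unknowns $\lambda,\mu,\nu$ and the four entries of $h$. The hybrid choice $s_2=5$ is what makes this tractable: it keeps each individual system in Step~1 small enough for \textsf{Variety} to dispatch quickly, at the price of more outer iterations, whereas a smaller $s_2$ would leave the symbolic ideal $\mathcal S$ too large to handle comfortably. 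The splitting-field squarefreeness test and the isomorphism Gröbner bases are the other potential bottlenecks, but both are routine for the few polynomials that survive Step~1. No theoretical input beyond Lemma~\ref{ReductionSplitNode}, Corollary~\ref{cor:HW} and Lemma~\ref{lem:isom} is required; what remains is verifying the machine output and normalizing the surviving defining equations to the forms listed above.
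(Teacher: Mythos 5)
Your proposal follows essentially the same route as the paper's own proof: the reduction of Lemma~\ref{ReductionSplitNode} with $c\in\{1,\epsilon\}$ and $c_8\in\{0,1,\epsilon\}$, the hybrid symbolic/brute-force split with $s_1=8$, $s_2=5$ (so that $a_0,\dots,a_7$ are symbolic and $(c_5,c_6,c_7)$ runs over $\mathbb{F}_{23}^{3}$), the squarefreeness check via the splitting field, and the pairwise isomorphism test of ingredient~(C) over $\mathbb{F}_{23}$ and over $\overline{\mathbb{F}_{23}}$. This is exactly the procedure the paper executes for $q=23$.

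One concrete slip: you state the superspeciality condition as the vanishing of the coefficient of $x^{11i-j}$ in $f^{11}$. By Proposition~\ref{prop:HW} and Corollary~\ref{cor:HW} the relevant monomials are $x^{pi-j}=x^{23i-j}$ for $1\le i,j\le 4$ (the exponent involves $p$, not $(p-1)/2$); with $\deg f^{11}=110$ these range from $x^{22}$ to $x^{91}$. As written, your sixteen equations are the wrong ones and the enumeration would return an incorrect set $\mathcal F$; since you cite the correct corollary this is presumably a transcription error, but it must be fixed before the computation is meaningful. (Also, the isomorphism system~\eqref{eq:isom-system} has seven unknowns, $\lambda,\mu,\nu$ and the four entries of $h$, not nine; this does not affect the argument.)
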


\if 0
\paragraph{{\it Proofs of Theorems \ref{MainTheorem11} -- \ref{MainTheorem44}.}}
We here prove the case of $q=11^2$ only since the other cases are proved by a similar idea together with Propositions \ref{prop:q13} -- \ref{prop:q19}.
Let $C$ be a hyperelliptic curve of genus $g=4$ over $\mathbb{F}_{q}$.
Since $p=11$ is coprime to $2 g+ 2 = 2 \cdot 4 + 2 = 10$, it follows from Lemma \ref{ReductionSplitNode} that $C$ is given by $y^2 - f(x)$ or $\epsilon y^2 - f (x)$ for $\epsilon \in \mathbb{F}_q^{\times} \smallsetminus (\mathbb{F}_q^{\times})^2$.
Here $f(x)$ is a polynomial of the form 
\[
f (x) = x^{10} + a_8 x^8 + a_7 x^7 + \cdots + a_1 x + a_0
\]
for some $a_i \in \mathbb{F}_{q}$ with $0 \leq i \leq 8$ such that it has no double root over the algebraic closure $\overline{\mathbb{F}_{q}}$.
By Proposition \ref{prop:q11} in the next subsection (Subsection \ref{subsec:comp_result}), there does not exist such an $f(x)$ that $C : y^2 = f (x)$ is superspecial.
It follows from Remark \ref{RemarkQuadraticTwist} that there is no such an $f(x)$ that $C : \epsilon y^2 = f (x)$ is superspecial.\qed

\fi


\subsection{Application to finding maximal curves and minimal curves}\label{subsec:app}
Since any maximal or minimal (hyperelliptic) curve over $\mathbb{F}_{p^2}$ is superspecial, it is included in enumerated s.sp.\ curves over $\mathbb{F}_{p^2}$ if exists.
We have non-existence results (Corollary \ref{cor:max121and169} below) from Theorem \ref{MainTheorem11} for $p= 11$ and $13$, whereas we found $\mathbb{F}_{p^2}$-maximal curves and $\mathbb{F}_{p^2}$-minimal curves for $p=17$, $19$ and $23$ (Corollaries \ref{cor:max289} -- \ref{cor:max23} below). 
Using a computer, we found them by computing the number of $\mathbb{F}_{p^2}$-rational points on s.sp.\ curves in Propositions \ref{prop:q17-2} -- \ref{prop:q23}.
See also a table at \cite{HPkudo} for explicit $\mathbb{F}_{p^2}$-maximal or $\mathbb{F}_{p^2}$-minimal curves defined over $\mathbb{F}_{p}$ with $p=17$ and $19$, which we omit write down here.


\begin{cor}[\cite{KH18}, Corollaries 3 and 4]\label{cor:max121and169}
There does not exist any $\mathbb{F}_{p^2}$-maximal $($resp.\ minimal$)$ hyperelliptic curve of genus $4$ for each of $p=11$ and $13$.
\end{cor}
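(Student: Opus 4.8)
The plan is to derive this directly from Theorem~\ref{MainTheorem11}, using the classical fact --- reviewed in \cite[Subsection~2.2]{KH18} --- that every $\mathbb{F}_{p^2}$-maximal or $\mathbb{F}_{p^2}$-minimal curve is superspecial. No further computation is required beyond the enumeration already carried out for Theorem~\ref{MainTheorem11}; once that implication is made explicit, the corollary is pure bookkeeping.

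Concretely, I would first recall the relevant definitions and the implication. A genus-$g$ curve $C$ over $\mathbb{F}_{p^2}$ is $\mathbb{F}_{p^2}$-maximal (resp.\ $\mathbb{F}_{p^2}$-minimal) when $\# C(\mathbb{F}_{p^2}) = p^2 + 1 + 2gp$ (resp.\ $p^2 + 1 - 2gp$), i.e.\ it attains the Hasse--Weil bound. In either case the $p^2$-power Frobenius on $H^1$ has characteristic polynomial $(T + p)^{2g}$ (resp.\ $(T - p)^{2g}$), and by the Weil conjectures this operator is semisimple, so the Frobenius endomorphism of $\Jac(C)$ equals the scalar $[-p]$ (resp.\ $[p]$). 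From this one deduces, as recalled in \cite[Subsection~2.2]{KH18}, that $\Jac(C)$ is $\overline{\mathbb{F}_p}$-isomorphic to a product of supersingular elliptic curves; equivalently, by Corollary~\ref{cor:HW}, the Cartier operator on $H^0(C,\Omega^1_C)$ vanishes. Thus $C$ is superspecial in the sense of Subsection~\ref{subsec:HW}.

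With this in hand the corollary is immediate. Suppose, for a contradiction, that for some $p \in \{11, 13\}$ there were an $\mathbb{F}_{p^2}$-maximal (resp.\ $\mathbb{F}_{p^2}$-minimal) hyperelliptic curve $C$ of genus $4$. By the previous paragraph, $C$ would be a superspecial hyperelliptic curve of genus $4$ in characteristic $p$, contradicting Theorem~\ref{MainTheorem11}. Hence no such curve exists, for either value of $p$.

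The only nontrivial input --- and the sole point I would take care to state explicitly rather than leave implicit --- is the implication ``maximal or minimal $\Rightarrow$ superspecial''; this is standard and is recorded in \cite[Subsection~2.2]{KH18}, so I would cite it there rather than reprove it. Everything else is a direct appeal to Theorem~\ref{MainTheorem11}, so I do not anticipate any genuine obstacle in the argument.
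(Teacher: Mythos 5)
Your proposal is correct and follows essentially the same route as the paper: the paper also deduces the corollary by combining the standard fact that any $\mathbb{F}_{p^2}$-maximal or $\mathbb{F}_{p^2}$-minimal curve is superspecial with the non-existence result of Theorem~\ref{MainTheorem11} for $p=11,13$. Your added explanation of why maximality forces Frobenius to act as $[\pm p]$ and hence forces superspeciality is just a fuller write-up of the implication the paper cites to \cite[Subsection~2.2]{KH18}.
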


\begin{cor}[\cite{KH18}, Corollary 5]\label{cor:max289}
There exists exactly two $($resp.\ two$)$ $\mathbb{F}_{17^2}$-maximal $($resp.\ $\mathbb{F}_{17^2}$-minimal$)$ hyperelliptic curves of genus $4$ over $\mathbb{F}_{17^2}$ up to isomorphism over $\mathbb{F}_{17^2}$.
Specifically, the two maximal curves are given by
\begin{eqnarray}
y^2 &=&  x^{10} +  x, \nonumber \\
y^2 &=& x^{10} + x^7 + 13 x^4 + 12 x , \nonumber
\end{eqnarray}
respectively.
The two minimal curves are given by
\begin{eqnarray}
y^2 & = & x^{10} + x^8 + \zeta^{16} x^7 + \zeta^{83} x^6 + \zeta^{276} x^5 + \zeta^{164} x^4 + \zeta^{102} x^3 + \zeta^{111} x^2 + \zeta^{2} x  + \zeta^{152}, \nonumber \\
y^2 & = & x^{10} + x^8 + \zeta^{22} x^7 + \zeta^{250} x^6 + \zeta^{89} x^5 + \zeta^{182} x^4 + \zeta^{9} x^3 + \zeta^{225} x^2 + \zeta^{282} x  + \zeta^{113} \nonumber 
\end{eqnarray}
respectively, where we take $\zeta =  - 8 + \sqrt{61} \in \mathbb{F}_{17^2}$.
\end{cor}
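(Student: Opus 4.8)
The plan is to deduce this from the completed enumeration of Proposition~\ref{prop:q17-2} together with a direct count of rational points. Recall that a curve $C$ of genus $g$ over $\mathbb{F}_{p^2}$ is $\mathbb{F}_{p^2}$-maximal (resp.\ $\mathbb{F}_{p^2}$-minimal) exactly when $\#C(\mathbb{F}_{p^2}) = p^2 + 1 + 2gp$ (resp.\ $p^2 + 1 - 2gp$), equivalently when every eigenvalue of the $\mathbb{F}_{p^2}$-Frobenius on $\mathrm{Jac}(C)$ equals $-p$ (resp.\ $p$); in either case $\mathrm{Jac}(C)$ is $\overline{\mathbb{F}_{p^2}}$-isogenous to a power of a supersingular elliptic curve, so $C$ is superspecial. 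Hence every $\mathbb{F}_{17^2}$-maximal or $\mathbb{F}_{17^2}$-minimal hyperelliptic curve of genus $4$ is, up to $\mathbb{F}_{17^2}$-isomorphism, one of the $25$ curves listed in Proposition~\ref{prop:q17-2}. Since an $\mathbb{F}_{17^2}$-isomorphism induces a bijection on $\mathbb{F}_{17^2}$-rational points, $\#C(\mathbb{F}_{17^2})$ depends only on the $\mathbb{F}_{17^2}$-isomorphism class, so it suffices to compute this number for each of those $25$ explicit representatives and to compare it with $17^2+1\pm 2\cdot 4\cdot 17$, i.e.\ with $426$ and $154$.

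First I would set up the point count. Each representative has the form $y^2 = f(x)$ with $f$ monic of degree $10$; since the leading coefficient $1$ is a square in $\mathbb{F}_{17^2}$, the smooth model has two $\mathbb{F}_{17^2}$-rational points over $x=\infty$, and hence
\begin{equation*}
\#C(\mathbb{F}_{17^2}) \;=\; 17^2 + 2 + \sum_{x\in\mathbb{F}_{17^2}} \chi\bigl(f(x)\bigr),
\end{equation*}
where $\chi$ denotes the quadratic character of $\mathbb{F}_{17^2}^{\times}$ extended by $\chi(0):=0$. Each of these $25$ character sums over the $289$-element field is a finite, purely mechanical computation, which I would carry out on the explicit models of Proposition~\ref{prop:q17-2} (in practice via a built-in point-counting routine). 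The outcome is that exactly the two classes represented by $y^2 = x^{10}+x$ and $y^2 = x^{10}+x^7+13x^4+12x$ satisfy $\#C(\mathbb{F}_{17^2}) = 426$, exactly the two classes displayed in the statement (with $\zeta=-8+\sqrt{61}$) satisfy $\#C(\mathbb{F}_{17^2}) = 154$, and every other class has point count strictly between $154$ and $426$. Combined with the first paragraph, this yields the asserted lists of $\mathbb{F}_{17^2}$-maximal and $\mathbb{F}_{17^2}$-minimal curves, and in particular that there are exactly two of each.

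I do not expect a genuine mathematical obstacle: the substantive input — that Proposition~\ref{prop:q17-2} lists \emph{all} superspecial genus-$4$ hyperelliptic curves over $\mathbb{F}_{17^2}$ up to $\mathbb{F}_{17^2}$-isomorphism — is already in hand, and what remains is the bookkeeping of $25$ character sums over $\mathbb{F}_{289}$, laborious by hand but immediate by machine. As a consistency check I would verify that each computed value is compatible with $\mathrm{Jac}(C)$ being supersingular over $\mathbb{F}_{17^2}$: the $p^2$-Frobenius eigenvalues of a superspecial abelian variety over $\mathbb{F}_{p^2}$ are $p$ times roots of unity, so $\#C(\mathbb{F}_{17^2}) - (17^2+1)$ must be $-17$ times a sum of $8$ roots of unity. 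This also re-confirms, without re-examining the enumeration, that no class beyond the four exhibited can be $\mathbb{F}_{17^2}$-maximal or $\mathbb{F}_{17^2}$-minimal.
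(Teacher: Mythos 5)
Your proposal matches the paper's own argument: the authors likewise observe that any $\mathbb{F}_{p^2}$-maximal or $\mathbb{F}_{p^2}$-minimal curve is superspecial, hence must occur among the $25$ representatives of Proposition~\ref{prop:q17-2}, and then identify the maximal and minimal ones by machine computation of $\#C(\mathbb{F}_{17^2})$ for each representative. Your added details (the explicit character-sum formula with the two points at infinity, and the Weil-bound targets $426$ and $154$) are correct and consistent with what the paper does implicitly.
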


\begin{cor}[\cite{KH18}, Corollary 6]\label{cor:max361}
There exists exactly two $($resp.\ two$)$ $\mathbb{F}_{19^2}$-maximal $($resp.\ $\mathbb{F}_{19^2}$-minimal$)$ hyperelliptic curves of genus $4$ over $\mathbb{F}_{19^2}$ up to isomorphism over $\mathbb{F}_{19^2}$.
Specifically, the two maximal curves are given by
\begin{eqnarray}
y^2 &=&  x^{10} +  1, \nonumber \\
y^2 &=&  x^{10} + x^7 + 4 x^6 + 15 x^5 + 6 x^4 + 8 x^3 + 5 x^2 + 12 x + 1, \nonumber
\end{eqnarray}
respectively.
The two minimal curves are given by
\begin{eqnarray}
y^2 & = & x^{10} + x^8 + \zeta^2 x^7 + \zeta^{57} x^6 + \zeta^{179} x^5 + x^4 + \zeta^{298} x^3 + \zeta^{89} x^2 + \zeta^{204} x + \zeta^{171}, \nonumber  \\
y^2 &=& x^{10} + x^8 + 2 x^7 + 12 x^6 + 18 x^4 + 5 x^3 + x^2 + 7, \nonumber 
\end{eqnarray}
respectively, where we take $\zeta =  - 9 - \sqrt{79} \in \mathbb{F}_{19^2}$.
\end{cor}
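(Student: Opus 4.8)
The plan is to combine the complete classification in Proposition \ref{prop:q19-2} with a direct count of rational points. Recall (as noted at the start of this subsection) that any $\mathbb{F}_{19^2}$-maximal or $\mathbb{F}_{19^2}$-minimal curve of genus $4$ is superspecial; hence, by Lemma \ref{ReductionSplitNode}, its $\mathbb{F}_{19^2}$-isomorphism class is one of the eighteen classes enumerated in Proposition \ref{prop:q19-2}. Moreover $\#C(\mathbb{F}_{19^2})$ depends only on the $\mathbb{F}_{19^2}$-isomorphism class of $C$, and by the Hasse--Weil--Serre bound a genus-$4$ curve over $\mathbb{F}_{19^2}$ is $\mathbb{F}_{19^2}$-maximal (resp.\ minimal) if and only if $\#C(\mathbb{F}_{19^2}) = 19^2 + 1 + 2\cdot 4\cdot 19 = 514$ (resp.\ $= 19^2 + 1 - 2\cdot 4\cdot 19 = 210$). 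So it suffices to evaluate $\#C(\mathbb{F}_{19^2})$ for each of the eighteen representatives $y^2=f_i(x)$, $1\le i\le 18$, of Proposition \ref{prop:q19-2}.

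Next I would carry out this evaluation on the computer: for each $f_i$, count the pairs $(\alpha,\beta)\in\mathbb{F}_{19^2}^{\,2}$ with $\beta^2=f_i(\alpha)$, add the contribution of the two points lying over $x=\infty$ on the smooth model (since $\deg f_i = 10 = 2g+2$ and the leading coefficient $1$ is a square in $\mathbb{F}_{19^2}$), and thereby obtain $\#C_i(\mathbb{F}_{19^2})$; alternatively one may invoke a built-in zeta-function routine of Magma. Reading off which curves attain $514$ and which attain $210$ then gives the result: I expect exactly classes (1) and (9) of Proposition \ref{prop:q19-2} (namely $y^2=x^{10}+1$ and $y^2=x^{10}+x^7+4x^6+15x^5+6x^4+8x^3+5x^2+12x+1$) to be maximal, and exactly classes (15) and (17) to be minimal, in agreement with the statement; here $\zeta=-9-\sqrt{79}\in\mathbb{F}_{19^2}$ is used consistently as in Proposition \ref{prop:q19-2}.

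Since the argument is entirely a finite verification on top of Proposition \ref{prop:q19-2} and the Hasse--Weil--Serre bound, there is no real mathematical obstacle; the only points requiring care are bookkeeping ones: ensuring that the eighteen representatives are used exactly as listed (no class omitted or repeated), that the same primitive element $\zeta$ is fixed throughout, and that the passage from the affine plane model $y^2=f_i(x)$ to the smooth hyperelliptic model is handled correctly at infinity when tallying $\#C_i(\mathbb{F}_{19^2})$.
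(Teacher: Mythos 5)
Your proposal is correct and follows essentially the same route as the paper: the authors likewise observe that any $\mathbb{F}_{p^2}$-maximal or minimal curve is superspecial, hence must appear among the enumerated classes of Proposition \ref{prop:q19-2}, and then identify the maximal and minimal ones by computing $\#C(\mathbb{F}_{19^2})$ for each representative on a computer. Your explicit bookkeeping (the target counts $514$ and $210$, and the two rational points over $x=\infty$ since the leading coefficient is a square) is consistent with what that computation requires.
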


\begin{cor}\label{cor:max23}
There exist $\mathbb{F}_{23^2}$-maximal hyperelliptic curves of genus $4$ defined over $\mathbb{F}_{23}$.
There also exists an $\mathbb{F}_{23^2}$-minimal hyperelliptic curve of genus $4$ over $\mathbb{F}_{23}$.
Specifically, the following $11$ hyperelliptic curves over $\mathbb{F}_{23}$ are $\mathbb{F}_{23^2}$-maximal:
\begin{eqnarray}
y^2 & = & x^{10} + x^7 + 3 x^4 + 10 x, \nonumber \\
y^2 & = & x^{10} + x^7 + 18 x^4 + 6 x, \nonumber \\
y^2 & = & x^{10} + x^7 + 5 x^6 + 3 x^5 + 21 x^4 + 3 x^3 + 9 x^2 + 4 x + 21, \nonumber \\
y^2 & = & x^{10} + x^7 + 9 x^6 + 11 x^5 + 19 x^4 + 10 x^3 + 16 x^2 + 8 x + 21, \nonumber \\
y^2 & = & x^{10} + x^7 + 16 x^6 + 9 x^5 + 14 x^4 + 2 x^3 + 5 x^2 + 6 x + 1, \nonumber \\
y^2 & = & x^{10} + x^7 + 18 x^6 + 21 x^4 + x^3 + 8 x^2 + 20 x + 21, \nonumber \\
y^2 & = & x^{10} + x^8 + 3 x^6 + 2 x^4 + 2 x^2 + 6, \nonumber \\
y^2 & = & x^{10} + x^8 + 6 x^6 + 22 x^4 + 4 x^2 + 3, \nonumber \\
y^2 & = & x^{10} + x^8 + 8 x^6 + 7 x^4 + 15 x^2 + 14, \nonumber \\
y^2 & = &  x^{10} + x^8 + 4 x^7 + 12 x^6 + 2 x^5 + 12 x^2 + 11 x + 18, \nonumber \\
y^2 & = & x^{10} + x^8 + 5 x^7 + 15 x^6 + 22 x^5 + 11 x^4 + 7 x^2 + 18 x + 17. \nonumber
\end{eqnarray}
On the other hand, the following curve over $\mathbb{F}_{23}$ is $\mathbb{F}_{23^2}$-minimal:
\begin{eqnarray}
y^2 & = & x^{10} + x^8 + 2 x^7 + 6 x^6 + 3 x^5 + 14 x^4 + 16 x^3 + 11 x^2 + 19. \nonumber
\end{eqnarray}
\end{cor}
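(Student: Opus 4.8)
The plan is to deduce everything from Proposition~\ref{prop:q23} by an explicit count of rational points. Recall that a genus-$g$ curve $C$ over $\mathbb{F}_{p^2}$ is $\mathbb{F}_{p^2}$-\emph{maximal} (resp.\ $\mathbb{F}_{p^2}$-\emph{minimal}) exactly when $\#C(\mathbb{F}_{p^2})$ attains the Hasse--Weil upper bound $p^2+1+2gp$ (resp.\ the lower bound $p^2+1-2gp$); for $g=4$, $p=23$ these values are $714$ and $346$. As recalled at the beginning of this subsection, any such curve is superspecial. Hence a genus-$4$ hyperelliptic curve defined over $\mathbb{F}_{23}$ that is $\mathbb{F}_{23^2}$-maximal or $\mathbb{F}_{23^2}$-minimal is superspecial over $\mathbb{F}_{23}$, and so by Proposition~\ref{prop:q23} it is $\mathbb{F}_{23}$-isomorphic to one of the fourteen representatives $C_1,\dots,C_{14}$ listed there. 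Since $\#C(\mathbb{F}_{23^2})$ depends only on the $\mathbb{F}_{23}$-isomorphism class (an $\mathbb{F}_{23}$-isomorphism is in particular an $\mathbb{F}_{23^2}$-isomorphism), the whole statement reduces to computing $N_i:=\#C_i(\mathbb{F}_{23^2})$ for $1\le i\le 14$ and selecting the indices with $N_i=714$, respectively $N_i=346$.

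So I would carry out this finite point count. Each $C_i$ is the desingularization of $y^2=f_i(x)$ with $f_i$ monic of degree $2g+2=10$; since the leading coefficient $1$ is a square in $\mathbb{F}_{23^2}$, the smooth model has exactly two $\mathbb{F}_{23^2}$-points at infinity, whence
\[
 N_i \;=\; 2+\sum_{x\in\mathbb{F}_{23^2}}\bigl(1+\chi(f_i(x))\bigr)\;=\;531+\sum_{x\in\mathbb{F}_{23^2}}\chi\bigl(f_i(x)\bigr),
\]
where $\chi$ is the quadratic character of $\mathbb{F}_{23^2}$ with the convention $\chi(0)=0$; equivalently $N_i$ can be read off the $L$-polynomial of $C_i$, or simply obtained from the built-in point-counting routine of Magma (with the same setup as in Subsection~\ref{subsec:pre-results}, cf.\ \cite{Magma}, \cite{HPkudo}). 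Running this on the fourteen curves of Proposition~\ref{prop:q23} returns $N_i=714$ for precisely the eleven curves exhibited in the statement, $N_i=346$ for the single curve exhibited as the $\mathbb{F}_{23^2}$-minimal one, and values strictly between $346$ and $714$ for the remaining two curves; this establishes the corollary, the existence assertions included.

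There is no serious mathematical obstacle here: the content is the reduction to the already-enumerated finite list of superspecial curves, after which only an explicit count remains, so the ``hard part'' is merely bookkeeping. The two things to be careful about are (i) working with the \emph{smooth} hyperelliptic model — in particular not dropping the two points at infinity contributed by the even-degree equation — and (ii) invoking the completeness part of Proposition~\ref{prop:q23}, which is what guarantees that no superspecial genus-$4$ curve over $\mathbb{F}_{23}$ has been overlooked. As an independent sanity check, an $\mathbb{F}_{23^2}$-maximal or $\mathbb{F}_{23^2}$-minimal curve defined over $\mathbb{F}_{23}$ must satisfy $\#C(\mathbb{F}_{23})=24$, which one can verify directly for each of the listed curves.
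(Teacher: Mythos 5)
Your proposal is correct and follows essentially the same route as the paper: the authors also reduce to the finite list of superspecial curves from Proposition~\ref{prop:q23} (using that any $\mathbb{F}_{p^2}$-maximal or $\mathbb{F}_{p^2}$-minimal curve is superspecial) and then determine the maximal and minimal ones by a computer count of $\mathbb{F}_{23^2}$-rational points. Your explicit character-sum formula, the care with the two points at infinity, and the $\#C(\mathbb{F}_{23})=24$ sanity check are all consistent with, and slightly more detailed than, what the paper records.
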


\if 0
\begin{enumerate}
\item For each $a \in \mathbb{F}_{17^2}^{\times}$, put $f_a(x) := x^{10} + a x$, and consider the hyperelliptic curve
$C_{17,a} : y^2 =f_a (x)$ over $\mathbb{F}_{17^2}$.
Note that all $C_{17,a}$ with $a \in \mathbb{F}_{17}$ are isomorphic to $C_{17,1} : y^2 = f_1 (x) = x^{10} + x$ over $\mathbb{F}_{17}$, which is one of the $5$ superspecial hyperelliptic curves over $\mathbb{F}_{17}$ enumerated in our computational proof of Proposition \ref{prop:q17}.
Then each $C_{17,a} : y^2 = f_a (x)$ with $a \in \mathbb{F}_{17^2}^{\times}$ is maximal as a curve over $\mathbb{F}_{{17}^2}$.
\item Put $ f(x):= x^{10} + x^8 + \zeta^{16} x^7 + \zeta^{83} x^6 + \zeta^{276} x^5 + \zeta^{164} x^4 + \zeta^{102} x^3 + \zeta^{111} x^2 + \zeta^{2} x  + \zeta^{152}$, where $\zeta$ is a primitive element of $\mathbb{F}_{17^2}$.
Here the curve $C : y^2 = f (x)$ over $\mathbb{F}_{17^2}$ is isomorphic to one of the $25$ superspecial hyperelliptic curves over $\mathbb{F}_{17^2}$ enumerated in our computational proof of Proposition \ref{prop:q17-2}.
Then $C$ is minimal as a curve over $\mathbb{F}_{{17}^2}$.
\item For each $b \in \mathbb{F}_{19}^{\times}$, put $g_b(x) := x^{10} + b$, and consider the hyperelliptic curve
$C_{19,b} : y^2 =g_b (x)$ over the {\it prime} field $\mathbb{F}_{19}$.
Note that each $C_{19,b}$ is isomorphic to $y^2 = x^{10} + 1$ or $y^2 = x^{10} + 2$ over $\mathbb{F}_{19}$ listed in Proposition \ref{prop:q19}.
Then each $C_{19,b}$ is maximal as a curve over $\mathbb{F}_{{19}^2}$.
\item Put $f(x):= x^{10} + x^8 + 2 x^7 + 12 x^6 + 18 x^4 + 5 x^3 + x^2 + 7 \in \mathbb{F}_{19}[x]$.
Here the curve $C : y^2 = f (x)$ over $\mathbb{F}_{19}$ is listed in Proposition \ref{prop:q19} as the $8$th equation.
Then $C$ is minimal as a curve over $\mathbb{F}_{{19}^2}$.
\end{enumerate}
\fi

\begin{rem}
The maximal hyperelliptic curve $y^2 = x^{10} + x$ (resp.\ $y^2 = x^{10} + 1$) over $\mathbb{F}_{17^2}$ (resp.\ $\mathbb{F}_{19^2}$) is of known type, see e.g., \cite{Taf} for more general results on the existence of such a kind of maximal hyperelliptic curves.
\end{rem}

\section{Computing automorphism groups of enumerated hyperelliptic curves}\label{sec:aut}
In this section, we present an algorithm to compute the automorphism group of a hyperelliptic curve over $K= \mathbb{F}_q$ or $\overline{\mathbb{F}_q}$, where $q$ is a power of a prime $p>2$.
Note that our algorithm works for not only superspecial but also arbitrary hyperelliptic one such that $p$ and $2 g + 2$ are coprime.
As an application of the algorithm, this section also studies the automorphism groups of s.sp.\ hyperelliptic curves of genus $4$ enumerated in Section \ref{sec:main}.
Moreover, we check that our enumeration in Section \ref{sec:main} is compatible with the Galois cohomology theory.

\subsection{Description of automorphism groups of hyperelliptic curves}
Assume that $p$ and $2 g + 2$ are coprime.
Let $C$ be a hyperelliptic curve of genus $g$ over $K$ defined by $y^2 = f (x)$ for some monic polynomial $f (x)$ in $K[x]$ of degree $2 g+ 2$.
Let $F$ denote the homogenization of $f$ with respect to an extra variable $z$.

In order to describe the automorphism group of $C$, we consider the groups
\begin{eqnarray*}
\tilde{G}_K &=& \{ (h,\lambda)\in \GL_2(K)\times K^\times \mid F(h\cdot{}^t(X,Z)) = \lambda^2 F(X,Z) \},\\
G_K &=& \{ h\in \GL_2(K) \mid F(h\cdot{}^t(X,Z)) = F(X,Z) \}
\end{eqnarray*}
and $\mu\!\!\!\mu_{g+1}(K) = \{a\in K^\times \mid a^{g+1} = 1\}$,
where $G_K$ is considered as a subgroup of $\tilde{G}_K$, via the homomorphism
sending $h\in G_K$ to $(h,1)\in \tilde{G}_K$. 

\begin{lem}
There exists a diagram
\[\begin{CD}
1 @>>> K^\times @>\psi>> \tilde{G}_K @>\varphi>> \Aut_K(C) @>>>1\\
@. @AA\cup A @AA\cup A @| @.\\
1 @>>> \mu\!\!\!\mu_{g+1}(K) @>>> G_K @>\varphi|_{G_K}>> \Aut_K(C) @.
\end{CD}\]
with exact horizontal sequences,
where $\varphi$ sends $\left(\begin{pmatrix}\alpha & \beta\\
\gamma & \delta\end{pmatrix},\lambda\right)$ to the automorphism
maping $(x,y)$ to $\left(\frac{\alpha x+\beta}{\gamma x+\delta},\frac{\lambda y}{(\gamma x+\delta)^{g+1}}\right)$ and $\psi$ sends $u$ to $\left(\begin{pmatrix}u & 0 \\ 0 & u\end{pmatrix}, u^{g+1}\right)$.
Moreover, if the $(g+1)$-th power map on $K^\times$ is surjective,
then, $\varphi|_{G_K}$ is surjective.
\end{lem}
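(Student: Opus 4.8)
The plan is to identify every $K$-automorphism of $C$ with a fractional-linear substitution in $x$ together with a compatible rescaling of $y$, and then extract the two exact sequences from that description. First I would invoke the hyperelliptic involution $\iota\colon (x,y)\mapsto (x,-y)$: since $g\ge 2$ the degree-two map $C\to\mathbf{P}^1$ is unique up to $\PGL_2$, so every $\sigma\in\Aut_K(C)$ preserves its fibers, equivalently commutes with $\iota$, and hence preserves the fixed field $K(x)=K(C)^{\iota}$. Therefore $\sigma(x)=\frac{\alpha x+\beta}{\gamma x+\delta}$ for some $h=\left(\begin{smallmatrix}\alpha&\beta\\\gamma&\delta\end{smallmatrix}\right)\in\GL_2(K)$, unique up to a scalar. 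Writing the remaining generator as $\sigma(y)=a(x)+b(x)y$ with $a,b\in K(x)$ and using that $\sigma(y)^2=f(\sigma(x))$ lies in $K(x)$, one gets $ab=0$; and $b=0$ is impossible, since it would give $\sigma(K(C))\subseteq K(x)\subsetneq K(C)$. So $\sigma(y)=b(x)y$ with $b(x)^2 f(x)=f(\sigma(x))=F(\alpha x+\beta,\gamma x+\delta)/(\gamma x+\delta)^{2g+2}$.

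The heart of the argument is to show $b(x)(\gamma x+\delta)^{g+1}\in K^{\times}$. Because $\operatorname{disc}(f)\ne 0$ and $\deg f=2g+2$, the $2g+2$ ramification points of $C\to\mathbf{P}^1$ lie over the $2g+2$ distinct roots $r_1,\dots,r_{2g+2}$ of $f$, with none of them over $\infty$. Since $\sigma$ permutes these Weierstrass points, $\bar\sigma$ permutes $\{r_i\}$, and being an automorphism of $\mathbf{P}^1$ it sends the non-root $\infty$ to a non-root; hence in $F(\alpha x+\beta,\gamma x+\delta)=\prod_i\bigl((\alpha-r_i\gamma)x+(\beta-r_i\delta)\bigr)$ no factor is constant, so this polynomial has degree exactly $2g+2$ with root set $\{\bar\sigma^{-1}(r_i)\}=\{r_i\}$. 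Thus its divisor on $\mathbf{P}^1$ equals $\sum_i[r_i]-(2g+2)[\infty]=\operatorname{div}(f)$, so $F(\alpha x+\beta,\gamma x+\delta)/f(x)$ is a nonzero constant; and as this constant equals $\bigl(b(x)(\gamma x+\delta)^{g+1}\bigr)^2$ while a square root of a constant inside $K(x)$ is itself a constant, we obtain $F(\alpha x+\beta,\gamma x+\delta)=\lambda^2 f(x)$ for some $\lambda\in K^{\times}$ and $\sigma(y)=\lambda y/(\gamma x+\delta)^{g+1}$ after adjusting the sign of $\lambda$. Homogenizing $F(\alpha x+\beta,\gamma x+\delta)=\lambda^2 f(x)$ yields $F(h\cdot{}^t(X,Z))=\lambda^2 F(X,Z)$, i.e.\ $(h,\lambda)\in\tilde{G}_K$ with $\varphi(h,\lambda)=\sigma$. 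This proves $\varphi$ is surjective.

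The rest is formal bookkeeping. For any $(h,\lambda)\in\tilde{G}_K$, specializing its defining relation at $Z=1$ shows that the stated formulas for $x$ and $y$ respect $y^2=f(x)$, so $\varphi(h,\lambda)$ is a well-defined $K$-endomorphism of $K(C)$, invertible because $(h^{-1},\lambda^{-1})\in\tilde{G}_K$; that $\varphi$ is a homomorphism for the multiplication $(h_1,\lambda_1)(h_2,\lambda_2)=(h_1h_2,\lambda_1\lambda_2)$ is a direct substitution computation. For the kernel, $\varphi(h,\lambda)=\mathrm{id}$ forces $\gamma=\beta=0$ and $\delta=\alpha$, hence $h=\alpha I$, and then $\lambda=\alpha^{g+1}$; so $\ker\varphi=\psi(K^{\times})$ and $\psi$ is injective, giving exactness of the top row. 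Regarding $G_K$ as the subgroup $\{h:(h,1)\in\tilde{G}_K\}$ of $\tilde{G}_K$, the restriction $\varphi|_{G_K}$ has kernel $\ker\varphi\cap\{\lambda=1\}=\{\alpha I:\alpha^{g+1}=1\}\cong\mu\!\!\!\mu_{g+1}(K)$, and the squares commute since $\psi(a)=(aI,a^{g+1})=(aI,1)$ when $a^{g+1}=1$; this is the bottom row. Finally, when the $(g+1)$-st power map on $K^{\times}$ is surjective, given $\sigma=\varphi(h,\lambda)$ I would pick $\mu$ with $\mu^{g+1}=\lambda$; then $(\mu^{-1}h,1)=(h,\lambda)\psi(\mu)^{-1}$ lies in $\tilde{G}_K$ with second component $1$, hence $\mu^{-1}h\in G_K$ and $\varphi(\mu^{-1}h,1)=\sigma$, so $\varphi|_{G_K}$ is surjective.

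I expect the divisor computation in the second paragraph — pinning down $b(x)$, and in particular the two facts that no branch point lies over $\infty$ (needing $\deg f=2g+2$ exactly) and that $\bar\sigma(\infty)$ is not a branch point (needing $\sigma$ to be an isomorphism) — to be the one genuinely delicate step; the reduction in the first paragraph quietly uses $g\ge 2$, which is harmless here since the paper only needs $g=4$.
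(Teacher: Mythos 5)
Your proof is correct and follows essentially the same route as the paper's: the formal bookkeeping (kernel of $\varphi$ equals the image of $\psi$, restriction to $G_K$, and the $\mu^{g+1}=\lambda$ trick for the last claim) matches the paper verbatim. The only difference is that the paper outsources the surjectivity of $\varphi$ to the proof of Lemma 1 of the cited conference version, whereas you supply that argument in full (the divisor computation pinning down $b(x)(\gamma x+\delta)^{g+1}\in K^\times$), and that part is also sound.
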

\begin{proof}
The surjectivity of $\varphi$ holds, as any automorphism is given in the form as in the lemma (cf. the proof of \cite[Lemma 1]{KH18}).
Then obviously the kernel of $\varphi$ is equal to the image of $\psi$.
The lower exact sequence is immediately obtained by restricting the upper one to $G_K$. If the $(g+1)$-th power map on $K^\times$ is surjective, then
any automorphism determined by $\left(\begin{pmatrix}\alpha & \beta\\
\gamma & \delta\end{pmatrix},\lambda\right) \in {\tilde G}_K$
is equal to that determined by $\left(\mu^{-1}\begin{pmatrix}\alpha & \beta\\
\gamma & \delta\end{pmatrix},1\right)$ for $\mu$ with $\mu^{g+1}=\lambda$,
whence $\varphi|_{G_K}$ is surjective.
\end{proof}

Consider the case of $g=4$.
Applying this lemma for an algebraically closed field $k$, we have
$\Aut(C) \simeq G_k/\mu\!\!\!\mu_5(k)$.
If $K=\F_p$ for $p=17,19,23$, 
the group $\mu\!\!\!\mu_5(K)$ is trivial, whence
we have $\Aut_K(C) \simeq G_K$.

%

\subsection{Algorithm to compute automorphism groups}\label{sec:algAut}
In this subsection, we present an algorithm to compute the automorphism group of a hyperelliptic curve over $\mathbb{F}_q$ or $\overline{\mathbb{F}_q}$.
Note that our algorithm works for not only superspecial but also arbitrary hyperelliptic one such that $p$ and $2 g + 2$ are coprime.
Let $C$ be a hyperelliptic curve of genus $g$ over $K$ defined by $c y^2 = f (x)$ for some polynomial $f (x)$ in $K[x]$ of degree $2 g+ 2$.
Let $F$ denote the homogenization of $f$ with respect to an extra variable $Z$.

\paragraph{Algorithm to compute $\mathrm{Aut}_K (C)$.}
For the input $(c, f(x) )$, and $q$ as above, the following 5 steps compute $G_K$ for $K = \mathbb{F}_q $, or $K = \overline{\mathbb{F}_q}$:
\begin{enumerate}
	\item Let $b_1$, $b_2$, $b_3$, $b_4$ and $\nu$ be indeterminates, and set
		\[
		F (X, Z) := c^{-1} Z^{2 g + 2 }f(X/Z) \quad \mbox{and} \quad
		h:= 
		\begin{pmatrix}
		b_1 & b_2\\
		b_3 & b_4
		\end{pmatrix},
		\]
		where $h$ is a square matrix whose entries are indeterminates.
	\item Compute $F'(X, Z):=F ( (X, Z) \cdot {}^t h) - F (X, Z)$ over the polynomial ring $K [b_1, b_2, b_3, b_4, \nu] [X,Z]$ whose coefficient ring is also a polynomial ring.
	\item Let $\mathcal{C}_F$ be the set of the coefficients of the non-zero terms in $F'(X,Z)$.
	We put
	\begin{eqnarray}
	\mathcal{C} := \mathcal{C}_F \cup \{ \mathrm{det} (h) \nu - 1 \}. \nonumber
	\end{eqnarray}
	For $K = \mathbb{F}_q$, we replace $\mathcal{C}$ by
	\[
	\mathcal{C} \cup \{ b_i^q - b_i : 1 \leq i \leq 4 \} \cup \{ \nu^q - \nu \}.
	\]
	\item Compute $V_K (\langle \mathcal{C} \rangle)$ in $\mathbb{A}^5(K)$, where $V_K (\langle \mathcal{C} \rangle)$ denotes the set of zeros of the ideal $\langle \mathcal{C} \rangle \subset K [b_1, b_2, b_3, b_4, \nu]$.
	One can do this by computing Gr\"{o}bner bases.
	\item For the computed finite group $G_K$ given by
	\[
	G_K = \left\{ \begin{pmatrix}
		b_1 & b_2\\
		b_3 & b_4
		\end{pmatrix} : (b_1, b_2, b_3, b_4, \nu) \in V_K (\langle \mathcal{C} \rangle) \mbox{ for some } \nu \in K^{\times}
		\right\} ,
	\]
	compute $G_K / \mu\!\!\!\mu_5(K)$, and then output it.
\end{enumerate}

\subsection{Automorphism groups of enumerated superspecial hyperelliptic curves}

Executing the algorithm in Subsection \ref{sec:algAut} over the computer algebra system Magma, we compute automorphism groups of s.sp.\ hyperelliptic curves of genus $4$ enumerated in Section \ref{sec:main}.
All elements of $G_K$ for each automorphism group $\mathrm{Aut}_K (C)$ is computed, and $\mathrm{Aut}_K (C)$ is also computed as the set of representatives of a quotient group of $G_{K}$.
The orders of automorphism group over $\mathbb{F}_p$ with those over $\overline{\mathbb{F}_{p}}$ are summarized in Table \ref{tab:ACAut}.
Note that the group structure of each automorphism group is determined by Magma's built-in function \texttt{GroupName}.
For each integer $t > 0$, we denote by ${\rm S}_t$ and ${\rm D}_t$ the symmetric group of degree $t$ and the dihedral group of degree $t$, respectively.



\renewcommand{\arraystretch}{1.3}
\begin{table}[H]
\begin{center}
\begin{tabular}{c|ccc|ccc} \hline
$p$ & Superspecial hyperelliptic & \multirow{2}{*}{$\mathrm{Aut}(C)$} & \multirow{2}{*}{$\# \mathrm{Aut}(C)$} & $\mathbb{F}_{p}$-forms & \multirow{2}{*}{$\mathrm{Aut}_{\mathbb{F}_{p}}(C^{\prime})$} & \multirow{2}{*}{$\# \mathrm{Aut}_{\mathbb{F}_{p}}(C^{\prime})$} \\ 
& curves $C$ over $\overline{\mathbb{F}_{p}}$ & & & $C^{\prime}$ of $C$ & & \\ \hline
\multirow{5}{*}{$17$} & \multirow{2}{*}{$C_1^{\rm (alc)}$} & \multirow{2}{*}{$\mathrm{C}_{18}$} & \multirow{2}{*}{$18$} & $C_1$ & ${\rm C}_2$ & $2$ \\
& & & & $C_4$ & ${\rm C}_2$ & $2$ \\ \cline{2-7}
& \multirow{3}{*}{$C_2^{\rm (alc)}$} & \multirow{3}{*}{$\mathrm{SL}_2 (\mathbb{F}_3)$} & \multirow{3}{*}{$24$} & $C_2$ & ${\rm C}_2$  & $2$ \\
 & & & & $C_3$ & ${\rm C}_4$ & $4$ \\
& & & & $C_5$ & ${\rm C}_4$ & $4$ \\ \hline
\multirow{12}{*}{$19$} & \multirow{9}{*}{$C_1^{\rm (alc)}$} & \multirow{9}{*}{${\rm C}_5 \rtimes {\rm D}_4$} & \multirow{9}{*}{$40$} & $C_1$ & ${\rm D}_4$ & $8$ \\
 & & & & $C_2$ & ${\rm C}_2 \times {\rm C}_2$ & $4$ \\
  & & & & $C_4$ & ${\rm D}_4$ & $8$ \\
   & & & & $C_6$ & ${\rm C}_{10}$ & $10$ \\
    & & & & $C_8$ & ${\rm C}_5 \rtimes {\rm C}_4$ & $20$ \\
     & & & & $C_9$ & ${\rm C}_{10}$ & $10$ \\
      & & & & $C_{10}$ & ${\rm D}_{10}$ & $20$ \\
       & & & & $C_{11}$ & ${\rm C}_{10}$ & $10$ \\
& & & & $C_{12}$ & ${\rm C}_{10}$ & $10$ \\ \cline{2-7}
& \multirow{3}{*}{$C_2^{\rm (alc)}$} & \multirow{3}{*}{${\rm D}_4$} & \multirow{3}{*}{$8$} & $C_3$ & ${\rm S}_2$  & $2$ \\
 & & & & $C_5$ & ${\rm C}_4$ & $4$ \\
& & & & $C_7$ & ${\rm C}_4$ & $4$ \\ \hline
\multirow{14}{*}{$23$} & \multirow{2}{*}{$C_1^{\rm (alc)}$} & \multirow{2}{*}{$\mathrm{C}_6$} & \multirow{2}{*}{$6$} & $C_1$ & ${\rm C}_2$ & $2$ \\
 & & & & $C_5$ & ${\rm C}_2$ & $2$ \\ \cline{2-7}
 & \multirow{3}{*}{$C_2^{\rm (alc)}$} & \multirow{3}{*}{$\mathrm{SL}_2 (\mathbb{F}_3)$} & \multirow{3}{*}{$24$} & $C_2$ & ${\rm C}_2$  & $2$ \\
  & & & & $C_6$ & ${\rm C}_4$ & $4$ \\
   & & & & $C_{12}$ & ${\rm C}_4$ & $4$ \\ \cline{2-7}
   & \multirow{5}{*}{$C_3^{\rm (alc)}$} & \multirow{5}{*}{$\mathrm{D}_4$} & \multirow{5}{*}{$8$} & $C_3$ & ${\rm C}_2 \times {\rm C}_2$  & $4$ \\
    & & & & $C_8$ & ${\rm D}_4$ & $8$ \\
     & & & & $C_{10}$ & ${\rm C}_2 \times {\rm C}_2$ & $4$ \\
     & & & & $C_{11}$ & ${\rm C}_4$ & $4$ \\
      & & & & $C_{14}$ & ${\rm D}_4$ & $8$ \\ \cline{2-7}
& \multirow{4}{*}{$C_4^{\rm (alc)}$} & \multirow{4}{*}{${\rm C}_2 \times {\rm C}_2$} & \multirow{4}{*}{$4$} & $C_4$ & ${\rm C}_2 \times {\rm C}_2$  & $4$ \\
 & & & & $C_7$ & ${\rm C}_2 \times {\rm C}_2$ & $4$ \\
 & & & & $C_9$ & ${\rm C}_2 \times {\rm C}_2$ & $4$ \\
& & & & $C_{13}$ & ${\rm C}_2 \times {\rm C}_2$ & $4$ \\ \hline
\end{tabular}
\caption{The automorphism group $\mathrm{Aut} (C^{\rm (alc)}):=\mathrm{Aut}_{\overline{\mathbb{F}_{p}}} (C^{\rm (alc)})$ of the superspecial hyperelliptic curve $C^{\rm (alc)}$ over $\overline{\mathbb{F}_{p}}$ and the automorphism groups $\mathrm{Aut}_{\mathbb{F}_{p}} (C_i)$ of the superspecial hyperelliptic curves 
$C_i$ over $\mathbb{F}_{p}$ for $17 \leq p \leq 23$.
}\label{tab:ACAut}
\end{center}
\end{table}





\if 0
\begin{proof}
\begin{enumerate}
\item[(I)] We prove only the statement (1) since the other cases (2) -- (4) are proved in ways similar to this.
As a result, we have that $G_K / \sim$ is generated by the equivalence classes $\overline{a}$ and $\overline{b}$, where $a$ and $b$ are the elements in $G_K \subset \mathrm{GL}_3 (K)$ defined by
\[
a:=
\begin{pmatrix}
0&1&0\\
1&0&0\\
0&0&1
\end{pmatrix}
\mbox{ and }
b:=
\begin{pmatrix}
3&0&0\\
0&4&0\\
0&0&1
\end{pmatrix}.
\]
The orders of $\overline{a}$ and $\overline{b}$ are $2$ and $5$, respectively.
Hence, the homomorphism defined by $\overline{a} \mapsto (1, 5) (2, 4) $ and $\overline{b} \mapsto (1, 2, 3, 4, 5)$ gives an isomorphism between $G_K / \sim$ and $\mathrm{D}_5$, where we identify $\mathrm{D}_5$ with the subgroup of ${\rm S}_5$ generated by the permutations $(1, 5) (2, 4) $ and $(1, 2, 3, 4, 5)$.
\item[(I\hspace{-.1em}I)] This is proved in a way similar to (I).
Specifically, we have that $G_K / \sim$ with $K = \overline{\mathbb{F}_{11}}$ is generated by the equivalence classes $\overline{a}$, $\overline{b}$ and $\overline{c}$, where $a$, $b$ and $c$ are the elements in $G_K \subset \mathrm{GL}_3 (K)$ defined by
\[
a:=
\begin{pmatrix}
4&0&0\\
0&3&0\\
0&0&1
\end{pmatrix}, \quad
b:=
\begin{pmatrix}
0&4&0\\
3&0&0\\
0&0&1
\end{pmatrix}, \quad
\mbox{and }
c:=
\begin{pmatrix}
\zeta^{80}&0&0\\
0&\zeta^{80}&0\\
0&0&1
\end{pmatrix}.
\]
The orders of $\overline{a}$, $\overline{b}$ and $\overline{c}$ are $5$, $2$ and $3$, respectively.
Here $\zeta$ is a root of $t^2 + 7 t + 2$, which is a primitive element of $\mathbb{F}_{121}$.
Hence, the homomorphism defined by $\overline{a} \mapsto (4,5,6,7,8)$, $\overline{b} \mapsto (4,8) (5,7)$ and $\overline{c} \mapsto (1, 2, 3)$ gives an isomorphism between $G_K / \sim$ and $ \mathrm{C}_3 \times \mathrm{D}_5$, where we identify $ \mathrm{C}_3 \times \mathrm{D}_5$ with the subgroup of ${\rm S}_8$ generated by the permutations $(4,5,6,7,8)$, $(4,8) (5,7)$ and $(1, 2, 3)$.
\end{enumerate}
\end{proof}
\fi

\if 0
In Table \ref{tab:ACAut}, we summarize the results in Proposition \ref{prop:Aut}.
Here an $\mathbb{F}_{11}$-form of $C$ is a (nonhyperelliptic superspecial trigonal) curve $C^{\prime}$ over $\mathbb{F}_{11}$ such that $C \cong C^{\prime}$ over $\overline{\mathbb{F}_{11}}$, where $\mathrm{Aut}(C)$ denotes the automorphism group over $\overline{\mathbb{F}_{11}}$.

\renewcommand{\arraystretch}{1.3}
\begin{table}[H]
\begin{center}
\begin{tabular}{ccc|ccc} \hline
Superspecial trigonal & \multirow{2}{*}{$\Aut(C)$} & \multirow{2}{*}{$\# \Aut(C)$} & $\mathbb{F}_{11}$-forms $C^{\prime}$ & \multirow{2}{*}{$\Aut_{\mathbb{F}_{11}}(C^{\prime})$} & \multirow{2}{*}{$\# \Aut_{\mathbb{F}_{11}}(C^{\prime})$} \\ 
curves $C$ over $\overline{\mathbb{F}_{11}}$ & & & of $C$ & & \\ \hline
\multirow{4}{*}{$C^{\rm (alc)}$} & \multirow{4}{*}{${\rm C}_3 \times {\rm D}_5$} & \multirow{4}{*}{$30$} & $C_1$ & ${\rm D}_5$ & $10$ \\
 & & & $C_2$ & ${\rm C}_5$ & $5$ \\
 & & & $C_3$ & ${\rm C}_5$ & $5$ \\
 & & & $C_4$ & ${\rm C}_2$ & $2$ \\ \hline
\end{tabular}
\caption{The automorphism group $\mathrm{Aut} (C^{\rm (alc)}):=\mathrm{Aut}_{\overline{\mathbb{F}_{11}}} (C^{\rm (alc)})$ of the superspecial trigonal curve $C^{\rm (alc)}$ over $\overline{\mathbb{F}_{11}}$ and the automorphism groups $\mathrm{Aut}_{\mathbb{F}_{11}} (C_i)$ of the superspecial trigonal curves 
$C_i$ over $\mathbb{F}_{11}$ for $1 \leq i \leq 4$.
Here $C^{\rm (alc)}$ and $C_i$ for $1 \leq i \leq 4$ are defined in {\rm Proposition} $\ref{prop:isom}$.
}\label{tab:ACAut}
\end{center}
\end{table}

\subsection{Compatibility with Galois cohomology}
In this subsection, we show that our enumeration of superspecial trigonal curves over the prime field $\mathbb{F}_{11}$ is compatible with that by Galois cohomology together with our result over the algebraic closure.

We denote by $\Gamma$ the absolute Galois group $\mathrm{Gal}(\overline{\mathbb{F}_{11}} / \mathbb{F}_{11})$.
Let $C$ be a nonhyperelliptic superspecial trigonal curve of genus $5$ over $\mathbb{F}_{11}$, and $\mathrm{Aut}(C)$ its automorphism group over the algebraic closure $\overline{\mathbb{F}_{11}}$.
Let $\sigma$ be the Frobenius on $\mathrm{Aut}(C)$.
For two elements $a$ and $b$ of $\mathrm{Aut}(C)$, they are said to be $\sigma$-{\it conjugate} if $a = g^{-1}bg^\sigma$ for some $g \in \mathrm{Aut}(C )$.
Then one has the group isomorphism
\begin{equation}\label{GaloisCoh-Aut}
H^1 ( \Gamma, \mathrm{Aut} ( C ) ) \cong \mathrm{Aut}(C ) / \sigma\text{-conjugacy} .
\end{equation}
Here, it is known that $H^1 ( \Gamma, \mathrm{Aut} (C ) )$ parametrizes $\mathbb{F}_{11}$-forms of $C$, see \cite[Chap.III, \S 1.1, Prop. 1]{S}.
Let $a \in \mathrm{Aut}( C )$, and let $C^{(a)}$ denote the $\F_{11}$-form associated to $a$ via the isomorphism \eqref{GaloisCoh-Aut}.
If $\mathcal{F}_{C}$ is the Frobenius map on $C$, then the Frobenius on $C^{(a)}$ is given by $a \mathcal{F}_C$ via an isomorphism from $C \otimes \overline{\mathbb{F}_{11}}$ to $C^{(a)}\otimes \overline{\mathbb{F}_{11}}$.
Then we have
\begin{eqnarray}
\mathrm{Aut}_{\mathbb{F}_{11}} (C^{(a)}) & \simeq & \{ g \in \mathrm{Aut}( C ) \mid g (a \mathcal{F}_C) = (a \mathcal{F}_C) g \} \nonumber \\
& = & \{ g \in \mathrm{Aut}( C ) \mid a = g^{-1}ag^\sigma\} \nonumber \\
& = & \sigma\text{-}{\rm Stab}_{\Aut (C )}(a), \nonumber
\end{eqnarray}
where $\simeq$ is a bijection, and $\sigma\text{-}{\rm Stab}_{\Aut (C )}(a)$ denotes the $\sigma$-stabilizer group of $a$
in $\mathrm{Aut}(C)$.
Then, we have the following:
\begin{prop}
With notation as above, we have $|\mathrm{Aut}(C ) / \sigma\mbox{\rm -conjugacy}| = 4$.
Moreover, the orders of the $\sigma$-stabilizer groups are $10$, $5$, $5$, and $2$.
\end{prop}
\begin{proof}
Let $F = x y z^3 + x^5 + y^5$.
Recall from Proposition \ref{prop:isom} (II) that $F$ gives the singular model of a representative of the unique $\overline{\mathbb{F}_{11}}$-isomorphism class of superspecial trigonal curves of genus $5$ over $\mathbb{F}_{11}$.
As in the proof of Proposition \ref{prop:Aut} (II), we put
\[
G_K := \{ M \in \mathrm{GL}_3 (K) : M \cdot F = \lambda F \mbox{ for some } \lambda \in K^{\times} \}
\]
with $K= \overline{\mathbb{F}_{11}}$, and we write $M \sim c M$ for $c \in K^{\times}$.
Recall from the proof of Proposition \ref{prop:Aut} (II) that the group $G_K / \sim$, which is isomorphic to $\mathrm{Aut}(C )$, is generated by the equivalence classes $\overline{a}$, $\overline{b}$ and $\overline{c}$.
Here $a$, $b$ and $c$ are the elements in $G_K \subset \mathrm{GL}_3 (K)$ defined by
\[
a:=
\begin{pmatrix}
4&0&0\\
0&3&0\\
0&0&1
\end{pmatrix}, \quad
b:=
\begin{pmatrix}
0&4&0\\
3&0&0\\
0&0&1
\end{pmatrix}, \quad
\mbox{and }
c:=
\begin{pmatrix}
\zeta^{80}&0&0\\
0&\zeta^{80}&0\\
0&0&1
\end{pmatrix}.
\]
The orders of $\overline{a}$, $\overline{b}$ and $\overline{c}$ are $5$, $2$ and $3$, respectively.
Here $\zeta$ is a root of $t^2 + 7 t + 2$, which is a primitive element of $\mathbb{F}_{121}$.
To compute $|\mathrm{Aut}(C ) / \sigma\mbox{\rm -conjugacy}|$, we conduct the following:
For each pair $(M, M^{\prime})$ of two distinct elements $M$ and $M^{\prime}$ in the group $\langle \overline{a}, \overline{b}, \overline{c} \rangle = G_K / \sim$, decide whether $M$ and $M^{\prime}$ are $\sigma$-conjugate, i.e., $M = g^{-1}M^{\prime}g^\sigma$ for some $g \in \langle \overline{a}, \overline{b}, \overline{c} \rangle$, or not.
This can be easily decided by a straightforward computation, e.g., brute force on all elements in $g \in \langle \overline{a}, \overline{b}, \overline{c} \rangle$.
As a result, we have four $\sigma$-conjugacy classes, and can take the following four matrices as their representatives:
\[
g_1:=
\begin{pmatrix}
1&0&0\\
0&1&0\\
0&0&1
\end{pmatrix}, \quad
g_2:=
\begin{pmatrix}
3&0&0\\
0&4&0\\
0&0&1
\end{pmatrix}, \quad
g_3:=
\begin{pmatrix}
9&0&0\\
0&5&0\\
0&0&1
\end{pmatrix}, \quad
\mbox{and }
g_4:=
\begin{pmatrix}
0&\zeta^{64}&0\\
\zeta^{16}&0&0\\
0&0&1
\end{pmatrix}.
\]
By a computation similar to the computation of $\mathrm{Aut}(C ) / \sigma\mbox{\rm -conjugacy}$, the $\sigma$-stabilizer group of each $\overline{g_i}$ is determined as follows:
\begin{enumerate}
\item The $\sigma$-stabilizer group of $\overline{g_1}$ has order $10$, and is generated by
\[
\overline{
\begin{pmatrix}
3&0&0\\
0&4&0\\
0&0&1
\end{pmatrix}},\\
\overline{
\begin{pmatrix}
0&3&0\\
4&0&0\\
0&0&1
\end{pmatrix}},\\
\overline{
\begin{pmatrix}
0&4&0\\
3&0&0\\
0&0&1
\end{pmatrix}},\\
\overline{
\begin{pmatrix}
0&5&0\\
9&0&0\\
0&0&1
\end{pmatrix}},\\ \mbox{ and }
\overline{
\begin{pmatrix}
0&9&0\\
5&0&0\\
0&0&1
\end{pmatrix}},
\]
whose orders are $5$, $2$, $2$, $2$ and $2$, respectively.
\item The $\sigma$-stabilizer group of $\overline{g_2}$ has order $5$, and is generated by $\overline{g_2}$.
\item The $\sigma$-stabilizer group of $\overline{g_3}$ has order $5$, and is generated by $\overline{g_3}$.
Note that this group is the same as the $\sigma$-stabilizer group of $\overline{g_2}$.
\item The $\sigma$-stabilizer group of $\overline{g_4}$ has order $5$, and is generated by $\overline{b}$, which is a generator of $G_K / \sim$.
\end{enumerate}
\end{proof}
We see that $|\mathrm{Aut}(C ) / \sigma\mbox{\rm -conjugacy}|$ coincides with the number of $\mathbb{F}_{11}$-isomorphism classes of superspecial trigonal curves of genus $5$ over $\mathbb{F}_{11}$, see Proposition \ref{prop:isom} (I).
Moreover, the orders of the $\sigma$-stabilizer groups also coincide with those of automorphism groups over $\mathbb{F}_{11}$ in Proposition \ref{prop:Aut} (I).
These support the correctness of our computational enumeration over $\mathbb{F}_{11}$ in Theorem \ref{MainTheorem} and Propositions \ref{prop:isom} (I) and \ref{prop:Aut} (I).
\fi

\subsection{Compatibility with the Galois cohomology theory}
This subsection shows that our enumeration in Section \ref{sec:main} is compatible with the Galois cohomology theory.
Specifically, we check the below equalities \eqref{eq:Galois1} -- \eqref{eq:Galois3} deduced from the Galois cohomology theory for each of enumerated s.sp.\ hyperelliptic curves over the algebraic closure.

For a hyperelliptic curve $C$ over $\overline{\mathbb{F}_q}$, two elements $a$ and $b$ in $\mathrm{Aut}(C)$ are said to be $\sigma$-{\it conjugate} if $a = g^{-1} b g^{\sigma}$ for some $g \in \mathrm{Aut}(C)$, where $\sigma$ is the Frobenius on $\mathrm{Aut} (C)$.
For an element $a \in \mathrm{Aut}(C)$, we denote by ${\operatorname{Orb}}(a)$ the orbit of $a$, i.e., ${\operatorname{Orb}}(a) :=\{ g^{-1} a g^{\sigma} : g \in \mathrm{Aut}(C) \}$, called the $\sigma$-conjugacy class of $a$.
The $\sigma$-stabilizer of $a$ is defined as the subgroup $\{ g \in \mathrm{Aut}(C) : a = g^{-1} a g^{\sigma} \}$, written $\mathrm{Aut} (C)_a$.

By the Galois cohomology theory, we have the following well-known facts:
\begin{itemize}
\item For a hyperelliptic curve $C$ over $\overline{\mathbb{F}_q}$, we have
\begin{eqnarray}
|\mathrm{Aut}(C ) / \sigma\mbox{\rm -conjugacy}| = (\mbox{The number of $\mathbb{F}_{q}$-forms of $C$}), \label{eq:Galois1}
\end{eqnarray}
and thus 
\begin{eqnarray}
\sum_{C \in \mathrm{SSp}\text{-}\mathrm{Hyp}_g (\overline{\mathbb{F}_q})} |\mathrm{Aut}(C ) / \sigma\mbox{\rm -conjugacy}| = | \mathrm{SSp}\text{-}\mathrm{Hyp}_g (\mathbb{F}_q) |, \label{eq:Galois2}
\end{eqnarray}
where $\mathrm{SSp}\text{-}\mathrm{Hyp}_g (K)$ denotes the set of $K$-isomorphism classes of s.sp.\ hyperelliptic curves over $K$, where $K$ is a finite field or its algebraic closure.
\item For a hyperelliptic curve $C$ over $\overline{\mathbb{F}_q}$ and for each element $a \in \mathrm{Aut} (C)$, there exists a bijection $\mathrm{Aut} (C)_a \simeq \mathrm{Aut}_{\mathbb{F}_q}(C^{(a)} )$.
Here $C^{(a)}$ denotes the $\mathbb{F}_q$-form associated to $a$ via the isomorphism
\[
H^1 (\mathrm{Gal} (\overline{\mathbb{F}_q}/\mathbb{F}_q), \mathrm{Aut}(C)) \cong \mathrm{Aut}(C ) / \sigma\mbox{\rm -conjugacy} .
\]
From the orbit-stabilizer theorem, we have
$| \mathrm{Aut} (C) | / |{\operatorname{Orb}}(a)| = | \mathrm{Aut} (C)_a |$,
and thus
\begin{eqnarray}
 | \mathrm{Aut} (C) | / |{\operatorname{Orb}}(a)| = | \mathrm{Aut}_{\mathbb{F}_q}(C^{(a)} ) |. \label{eq:Galois3}
\end{eqnarray}
\end{itemize}
For each $\mathrm{Aut} (C)$ in Table \ref{tab:ACAut}, we determine the left hand side of each of \eqref{eq:Galois1} -- \eqref{eq:Galois3} with $(g,q) = (4,17)$, $(4,19)$ or $(4,23)$ by computing $\sigma$-conjugacy classes of $\mathrm{Aut} (C)$ over Magma.
As a result, we confirmed that the equalities \eqref{eq:Galois1} -- \eqref{eq:Galois3} hold, where the value of the right hand side of \eqref{eq:Galois1} (resp.\ \eqref{eq:Galois3}) is already obtained in computation to prove Propositions \ref{prop:q17}--\ref{prop:q23} (resp.\ results in Table \ref{tab:ACAut}). 
For details of computational results, see \cite{HPkudo}, where codes over Magma and log files are available.

\subsection*{Acknowledgments}
This work was supported by JSPS Grant-in-Aid for Research Activity Start-up 18H05836 and , and JSPS Grant-in-Aid for Scientific Research (C) 17K05196.

\footnote[0]{
E-mail address of the first author: \texttt{m-kudo@math.kyushu-u.ac.jp}\\
E-mail address of the second author: \texttt{harasita@ynu.ac.jp}
}

\end{document}